\documentclass[a4paper,11pt]{amsart}
\usepackage{hyperref,fancyhdr,mathrsfs,amsmath,amscd,amsthm,amsfonts,latexsym,amssymb,stmaryrd}
\usepackage[all]{xy}

\voffset 3mm
\topmargin 10mm
\evensidemargin  5mm
\oddsidemargin  5mm
\textwidth  145mm
\textheight 205mm
\headsep 5mm
\marginparsep 2mm
\marginparwidth 20mm
\footskip 0mm
\headheight 5mm

\linespread{1.1}

\pagestyle{fancy}
\lhead[{\footnotesize \thepage}]{\footnotesize HARMONIC MORPHISMS AND THE PENROSE--WARD TRANSFORM}
\chead[]{}
\rhead[\footnotesize Radu Pantilie]{\footnotesize \thepage}
\lfoot{}
\cfoot{}
\rfoot{}

\DeclareMathOperator{\trace}{trace}

\DeclareMathOperator{\rank}{rank}

\DeclareMathOperator{\dif}{d}

\newcommand{\Cal}{\mathcal{C}}

\newcommand{\Fa}{\mathcal{F}}

\newcommand{\ol}{\mathcal{O}}

\def \a{\alpha}

\def \b{\beta}

\def \g{\gamma}

\def \phi{\varphi}
\def \Phi{\varPhi} 
\def \Psi{\varPsi} 
\def \p{\pi}
\def \r{\rho}
\def \s{\sigma}
\def \t{\tau}

\def \R{\mathbb{R}}

\def \C{\mathbb{C}\,}

\def\widecheckg{g^{\hspace*{-2.5pt}\vbox to 5pt{\hbox to
0pt{\LARGE$\check{}$}}}\hspace*{2pt}}

\def\widecheckl{\lambda^{\hspace*{-3.5pt}\vbox to 8pt{\hbox to
0pt{\LARGE$\check{}$}}}\hspace*{2pt}}

\hyphenation{pro-duct}

\begin{document}

\title{(Pluri)harmonic morphisms\\ 
and the Penrose--Ward transform}
\author{Radu Pantilie}
\email{\href{mailto:radu.pantilie@imar.ro}{radu.pantilie@imar.ro}}
\address{R.~Pantilie, Institutul de Matematic\u a ``Simion~Stoilow'' al Academiei Rom\^ane,
C.P. 1-764, 014700, Bucure\c sti, Rom\^ania}
\subjclass[2010]{Primary 53C28, Secondary 53C43} 
\keywords{the Penrose--Ward transform, harmonic morphisms, quaternionic geometry}

\newtheorem{thm}{Theorem}[section]
\newtheorem{lem}[thm]{Lemma}
\newtheorem{cor}[thm]{Corollary}
\newtheorem{prop}[thm]{Proposition} 

\theoremstyle{definition}

\newtheorem{defn}[thm]{Definition}
\newtheorem{rem}[thm]{Remark}
\newtheorem{exm}[thm]{Example}

\numberwithin{equation}{section}
 
\begin{abstract} 
We show that, in quaternionic geometry, the Ward transform is a manifestation of the functoriality of the basic correspondence 
between the $\r$-quaternionic manifolds and their twistor spaces. We apply this fact, together with the Penrose transform, to 
obtain existence results for hypercomplex manifolds and for harmonic morphisms from hyper-K\"ahler manifolds. 
\end{abstract} 

\maketitle 
\thispagestyle{empty} 
\vspace{-5mm}

\section*{Introduction} 

\indent 
The Gibbons--Hawking construction \cite{GibHaw} starts with a positive harmonic function $v$\,, on an open subset $D$ of the Euclidean space, 
and builds the hyper-K\"ahler metric $g=vh+v^{-1}(\dif\!t+A)^2$ on $D\times\R$\,, where $h$ is the canonical metric on $D$ and $A$ is a (local) 
one-form on $D$ such that $(v,A)$ is a monopole; that is, $\dif\!v=*\dif\!A$\,.\\ 
\indent 
It is useful to, also, know the following facts:\\ 
\indent 
\quad(1) $(v,A)$ is a monopole if and only if the exterior derivative of $v\dif\!t-A$ is anti-self-dual (this is fairly well known and easy to prove),\\ 
\indent 
\quad(2) the twistor space $Z_{D\times\R}$ of $(D\times\R,g)$ is the holomorphic bundle over the twistor space $Z_D$ of $(D,h)$ ($Z_D$ is an open subset of $\ol(2)$\,) 
corresponding to the monopole $(v,A)$\,, through the Ward transform \cite{Hit-monop_geod}\,,\\  
\indent 
\quad(3) the projection $\phi:(D\times\R)\to(D,h)$ is a twistorial harmonic morphism:\\ 
\indent 
\qquad(3a) $\phi$ pulls back (local) harmonic functions on $(D,h)$ to harmonic functions on $(D\times\R,g)$ (see \cite{BaiWoo2}\,, \cite{Pan-EAR}\,),\\ 
\indent 
\qquad(3b) $\phi$ corresponds to a surjective holomorphic submersion $\Phi:Z_{D\times\R}\to Z_D$ mapping the twistor spheres 
diffeomorphically onto twistor spheres ($\Phi$ is just the projection of $Z_{D\times\R}$ as a bundle over $Z_D$).\\ 
\indent 
\emph{How to generalize this setting to higher dimensions?} A partial solution to this problem is known to exist by replacing $\ol(2)$ with $n\ol(2)$\,, 
where $n\in\mathbb{N}\setminus\{0\}$ (see \cite{PePo-88} and the references therein). 
In this paper, we adopt a more general approach which, in particular, gives an existence result for harmonic morphisms from hyper-K\"ahler manifolds 
whose twistor spaces are built over $Z_D=\ol(k)$\,, where $k\geq2$ is even. 
Briefly, we tackle this problem by observing that the Euclidean space and $\R$ are, just, $U_2$ and $U_0$\,, 
respectively, where $U_k$ is the irreducible representation of dimension $k+1$ of ${\rm SO}(3)$\,, with $k\geq2$ even. Then recall that 
$U_k$ is endowed with a ${\rm SO}(3)$-invariant Euclidean structure $h_k$\,, unique up to homotheties. 
Furthermore, $U_k$ is, also, endowed with a natural structure, which we call $\r$-quaternionic \cite{Pan-qgfs} (see, also, \cite{fq,fq_2,Pan-twistor_(co-)cr_q}\,), 
whose twistor space is $\ol(k)$\,; here, $\r$ is given by the projection from $E=U_k\oplus U_{k-2}$ onto $U_k$\,. 
This way, $D$ can be any open subset of $U_k$\,, and, as $E$ is a quaternionic vector space we, also, have an adequate notion 
of anti-self-dual two-form; that is, a two-form whose $(0,2)$-components with respect to all compatible linear complex structures are zero.\\ 
\indent  
Now, one of the main tasks is to establish the Ward transform for $\r$-quaternionic manifolds. 
In doing this, we realized that, in quaternionic geometry, \emph{the Ward transform is a manifestation of the functoriality of the basic correspondence 
between the $\r$-quaternionic manifolds and their twistor spaces}. Also, \emph{the adequate higher dimensional version of the notion of monopole is 
provided by the anti-self-dual principal $\r$-connections}.\\ 
\indent 
For simplicity, in this paper, we work only in the complex analytic category. In Section \ref{section:ro_quaternionic_vector_spaces} 
we review the $\r$-quaternionic vector spaces. Then in Sections \ref{section:inf_Ward_transform} and \ref{section:Ward_transform} 
we deal with the Ward transform, where the main results are Theorems \ref{thm:infinitesimal_Ward_transform} and \ref{thm:Ward_transform} 
but, along the way, other facts are obtained (such as, Remark \ref{rem:cohomology_for_pluriharmonic}\,) which are important for the intended applications.\\ 
\indent 
In Section \ref{section:WhK} we show that hypercomplex manifolds, obtained through the Ward transform, admit compatible hyper-K\"ahler metrics  
(Theorem \ref{thm:hK_Z_surface}\,). Moreover, these hyper-K\"ahler manifolds are domains of harmonic morphisms (Corollary \ref{cor:hmhK_1}\,). 
Furthermore, to define the relevant `harmonic sheaf' on the codomains of these harmonic morphisms, we introduce a (generalized) Obata $\r$-connection 
whose existence is, also, provided by the Ward transform.\\ 
\indent 
The applications are continued in Section \ref{section:pluriharmonic_and_hypercomplex}\,, where, firstly, we define in a convenient way the relevant notion 
of pluriharmonicity (Definition \ref{defn:pluriharmonic}\,), given by the Penrose transform. 
This is then used to obtain a natural correspondence between hypercomplex manifolds, 
whose twistor spaces are principal bundles with abelian structural groups, and certain cohomology classes (Corollary \ref{cor:hmhK_2}\,).

\section{$\r$-quaternionic vector spaces} \label{section:ro_quaternionic_vector_spaces} 

\indent 
We shall ignore the conjugations; that is, we shall work in the complex analytic category. Thus, all the objects and maps are assumed complex analytic 
and by the tangent bundle of a manifold we shall mean the holomorphic tangent bundle; in particular, all the vector spaces are assumed complex.\\ 
\indent 
Firstly, we formulate the complex versions of two classical notions.  

\begin{defn} 
A \emph{linear hypercomplex structure} on a vector space $E$ is a morphism of (unital) associative algebras from $\mathfrak{gl}(2,\C\!)$ to ${\rm End}\,E$\,.\\ 
\indent 
Two linear hypercomplex structures $\s, \s':\mathfrak{gl}(2,\C\!)\to{\rm End}\,E$ 
are \emph{equivalent} if there exists $a\in{\rm GL}(2,\C\!)$ such that $\s'=\s\circ{\rm Ad}\,a$\,. 
An equivalence class of linear hypercomplex structures is called a \emph{linear quaternionic structure}.\\   
\indent 
A \emph{hypercomplex/quaternionic vector space} is a vector space endowed with a linear hypercomplex/quaternionic structure. 
\end{defn}  

\indent 
Let $(E,\s)$ and $(E',\s')$ be hypercomplex vector spaces. A \emph{hypercomplex linear map} $\a:(E,\s)\to(E',\s')$ is a linear map such that 
$\a\bigl(\s(A)u\bigr)=\s'(A)\a(u)$\,, for any $A\in\mathfrak{gl}(2,\C\!)$ and $u\in E$\,.\\ 

\indent 
For the reader's convenience, we supply a proof for the following known result. 

\begin{prop} \label{prop:linear_hyper_classif}
Let $(E,\s)$ be a hypercomplex vector space. Then there exists a vector space $F$ and a hypercomplex linear isomorphism 
$\a:E\to\C^{\!2}\otimes F$, where the latter is endowed with the obvious linear hypercomplex structure.\\ 
\indent 
Moreover, for any such vector spaces $F$, $F'$ and hypercomplex linear isomorphisms $\a$, $\a'$ there exists a unique linear isomorphism 
$b:F\to F'$ such that $\a'=({\rm Id}_{\C^{\!2}}\otimes b)\circ\a$\,. 
\end{prop}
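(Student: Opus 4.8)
The plan is to recognize this as the module theory of the matrix algebra $\mathfrak{gl}(2,\C\!)=M_2(\C\!)$: being simple (Morita equivalent to $\C$), it has a single irreducible module, namely $\C^{\!2}$, and every module is a direct sum of copies of it. I would make this explicit using the matrix units $E_{ij}\in\mathfrak{gl}(2,\C\!)$ (with $i,j\in\{1,2\}$) and their images $e_{ij}:=\s(E_{ij})\in{\rm End}\,E$. Since $\s$ is a morphism of unital associative algebras, these satisfy $e_{ij}e_{kl}=\d_{jk}\,e_{il}$ and $e_{11}+e_{22}={\rm Id}_E$\,. In particular $e_{11}$ and $e_{22}$ are complementary idempotents, and $e_{12}$, $e_{21}$ restrict to mutually inverse isomorphisms between their images.

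For the existence statement I would set $F:=e_{11}(E)$ and define $\a\colon E\to\C^{\!2}\otimes F$ by $\a(u)=c_1\otimes e_{11}u+c_2\otimes e_{12}u$\,, where $\{c_1,c_2\}$ is the standard basis of $\C^{\!2}$; the relation $e_{11}e_{12}=e_{12}$ guarantees that $e_{12}u\in F$\,, so this is well defined. Writing $\s_0$ for the obvious hypercomplex structure on $\C^{\!2}\otimes F$, a direct check on the generators $E_{kl}$ reduces the identity $\a(\s(E_{kl})u)=\s_0(E_{kl})\a(u)$ to the matrix-unit relations above, giving that $\a$ is hypercomplex linear. To see $\a$ is an isomorphism I would exhibit its inverse $\b\colon\C^{\!2}\otimes F\to E$, with $\b(c_1\otimes f)=f$ and $\b(c_2\otimes f)=e_{21}f$; the verifications $\b\circ\a={\rm Id}$ and $\a\circ\b={\rm Id}$ use only $e_{21}e_{12}=e_{22}$, $e_{12}e_{21}=e_{11}$ and the fact that $e_{11}$ acts as the identity on $F$.

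For the uniqueness statement, given two such pairs $(F,\a)$, $(F',\a')$, I would consider the hypercomplex linear isomorphism $\g:=\a'\circ\a^{-1}\colon\C^{\!2}\otimes F\to\C^{\!2}\otimes F'$ and show that any such $\g$ has the form ${\rm Id}_{\C^{\!2}}\otimes b$. Because $\g$ commutes with $\s_0(E_{11})$, it preserves the summand $c_1\otimes F$, hence $\g(c_1\otimes f)=c_1\otimes b(f)$ for a unique linear $b\colon F\to F'$; commuting with $\s_0(E_{21})$ then forces $\g(c_2\otimes f)=c_2\otimes b(f)$, so $\g={\rm Id}_{\C^{\!2}}\otimes b$ and $b$ is an isomorphism since $\g$ is. This is exactly $\a'=({\rm Id}_{\C^{\!2}}\otimes b)\circ\a$, and $b$ is forced to equal $\a'\circ\a^{-1}$ restricted to the first summand, whence uniqueness.

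The computations are entirely routine once the matrix-unit formalism is in place, so I do not expect a genuine obstacle; the only point requiring a little care is choosing the definition of $\a$ (and the model $F=e_{11}(E)$) so that both components of $\a(u)$ actually land in $F$, and keeping track of which instance of $e_{ij}e_{kl}=\d_{jk}e_{il}$ is being used at each step.
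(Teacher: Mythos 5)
Your proof is correct, and it takes a genuinely different route from the paper's. The paper argues through the representation theory of $\mathfrak{sl}(2,\mathbb{C})$\,: it notes that $\sigma$ is injective because $\mathfrak{gl}(2,\mathbb{C})$ is a simple associative algebra, decomposes $E$ under the induced $\mathfrak{sl}(2,\mathbb{C})$-action into irreducibles $U_k$\,, rules out $k=0$ by a kernel argument, and rules out $k\geq2$ by observing that a nonzero nilpotent $A\in\mathfrak{sl}(2,\mathbb{C})$ satisfies $A^2=0$ in the algebra, hence $\sigma(A)^2=0$\,, whereas $\tau_k(A)$ is nilpotent of degree $k+1$\,; this forces $E$ to be a direct sum of copies of $U_1=\mathbb{C}^2$, after which the paper says only that ``the proof quickly follows'', leaving the construction of $\alpha$ and the whole uniqueness clause implicit. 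You instead invoke the classical module theory of the matrix algebra $M_2(\mathbb{C})$ made concrete through matrix units: the idempotents $e_{11},e_{22}$ and the elements $e_{12},e_{21}$ give a canonical model $F=e_{11}(E)$\,, an explicit isomorphism $\alpha$ with explicit inverse $\beta$\,, and a complete proof of uniqueness. Your computations check out: well-definedness via $e_{11}e_{12}=e_{12}$\,, the intertwining identities on the generators $E_{kl}$\,, the inverse via $e_{21}e_{12}=e_{22}$ and $e_{12}e_{21}=e_{11}$\,, and the uniqueness argument via commutation of $\gamma=\alpha'\circ\alpha^{-1}$ with $\sigma_0(E_{11})$ and $\sigma_0(E_{21})$ are all sound; the only implicit step---that the inverse of a hypercomplex linear isomorphism is again hypercomplex linear, so that $\gamma$ qualifies---is immediate and worth a sentence. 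What the paper's route buys is coherence with the rest of the text, where the irreducible representations $U_k$ and nilpotency-degree considerations recur (e.g., in Corollary \ref{cor:exact_seq_linear_q_classif} and Section \ref{section:WhK}); what your route buys is a shorter, self-contained, purely algebraic argument that establishes both the existence and the uniqueness statements constructively, including the part the paper's proof leaves to the reader.
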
 
\begin{proof} 
Note that, $\s$ is injective, because $\mathfrak{gl}(2,\C\!)$ is a simple associative algebra.\\ 
\indent  
Let $\t_k:\mathfrak{sl}(2,\C\!)\to {\rm End}\,U_k$ be the irreducible representation (of Lie algebras) with $\dim U_k=k+1$\,, $k\in\mathbb{N}$\,. 
As $\s$ is injective, the representation of $\mathfrak{sl}(2,\C\!)$ induced by $\s$ on $E$ decomposes into a direct sum of irreducible representations each of which 
is isomorphic to $\t_k$\,, for some $k$ in $M$ a finite subset of $\mathbb{N}$\,.\\ 
\indent 
Consequently, for each $k\in M$ we obtain a morphism of associative algebras 
from $\mathfrak{gl}(2,\C\!)$ to ${\rm End}\,U_k$ which when restricted to $\mathfrak{sl}(2,\C\!)$ gives $\t_k$\,, up to isomorphisms.\\ 
\indent 
It is easy to see that $0\notin M$, because any morphism of associative algebras from $\mathfrak{gl}(2,\C\!)$ to ${\rm End}\,U_0$ 
must have the kernel equal to $\mathfrak{sl}(2,\C\!)$ which is not an associative subalgebra of $\mathfrak{gl}(2,\C\!)$\,.\\ 
\indent 
Furthermore, as for any $A\in\mathfrak{sl}(2,\C\!)\setminus\{0\}$ with $A^2=0$ we have that $\t_k(A)$ is a nilpotent matrix of degree $k+1$\,, 
we deduce that $M=\{1\}$\,. The proof quickly follows.  
\end{proof} 

\indent 
Let $\bigl(E,[\s]\bigr)$ and $\bigl(E',[\s']\bigr)$ be quaternionic vector spaces. A \emph{quaternionic linear map} from $\bigl(E,[\s]\bigr)$ to 
$\bigl(E',[\s']\bigr)$ is a pair formed of a linear map $\a:E\to E'$ and an element $\pm a\in{\rm PGL}(2,\C\!)\,\bigl(={\rm GL}(2,\C\!)/\{\pm1\}\,\bigr)$ 
such that $\a\bigl(\s(A)u\bigr)=\bigl(\s'(A)\circ{\rm Ad}\,a\bigr)\a(u)$\,, for any $A\in\mathfrak{gl}(2,\C\!)$ and $u\in E$\,. 

\begin{cor} \label{cor:linear_q_classif} 
Let $\bigl(E,[\s]\bigr)$ be a quaternionic vector space. Then there exists a vector space $F$ and a quaternionic linear isomorphism 
$\a:E\to\C^{\!2}\otimes F$, where the latter is endowed with the obvious linear quaternionic structure.\\ 
\indent 
Moreover, for any such vector spaces $F$, $F'$ and quaternionic linear isomorphisms $\a$, $\a'$ there exists a pair  
$(a,b)$\,, with $a\in{\rm SL}(2,\C\!)$ and $b:F\to F'$ a linear isomorphism 
such that $\a'=(a\otimes b)\circ\a$\,; moreover, if $E$ is nontrivial, $(a,b)$ and $(-a,-b)$ are the only pairs with these properties.  
\end{cor}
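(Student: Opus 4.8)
The plan is to deduce the Corollary from the hypercomplex classification in Proposition \ref{prop:linear_hyper_classif}, the only extra ingredient being the automorphism of the $\C^{\!2}$ factor that absorbs the adjoint twist built into the notion of quaternionic linear map. Throughout, fix a representative $\s$ of $[\s]$ and write $\s_F(A)=A\otimes{\rm Id}_F$ for the obvious hypercomplex structure on $\C^{\!2}\otimes F$ (and similarly $\s_{F'}$ on $\C^{\!2}\otimes F'$).

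For existence, Proposition \ref{prop:linear_hyper_classif} provides a vector space $F$ and a hypercomplex linear isomorphism $\a\colon(E,\s)\to(\C^{\!2}\otimes F,\s_F)$. Taking $a={\rm Id}$ (so that ${\rm Ad}\,a={\rm Id}$), the pair $(\a,\pm{\rm Id})$ satisfies the defining relation of a quaternionic linear isomorphism onto $\C^{\!2}\otimes F$ endowed with its obvious quaternionic structure, as required.

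For the moreover part, let $\a\colon E\to\C^{\!2}\otimes F$ and $\a'\colon E\to\C^{\!2}\otimes F'$ be quaternionic linear isomorphisms, with associated $a_1,a_2\in{\rm GL}(2,\C\!)$ (each well defined up to sign) satisfying $\a\circ\s(A)=(\s_F\circ{\rm Ad}\,a_1)(A)\circ\a$ and likewise for $\a'$. Since ${\rm Ad}(a_1\otimes{\rm Id}_F)$ conjugates $\s_F(A)=A\otimes{\rm Id}_F$ into $(\s_F\circ{\rm Ad}\,a_1)(A)$, a direct computation shows that $\widetilde{\a}:=(a_1^{-1}\otimes{\rm Id}_F)\circ\a$ and $\widetilde{\a}':=(a_2^{-1}\otimes{\rm Id}_{F'})\circ\a'$ are genuine hypercomplex linear isomorphisms from $(E,\s)$ onto $(\C^{\!2}\otimes F,\s_F)$ and $(\C^{\!2}\otimes F',\s_{F'})$, respectively. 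The uniqueness clause of Proposition \ref{prop:linear_hyper_classif} then yields a linear isomorphism $b\colon F\to F'$ with $\widetilde{\a}'=({\rm Id}_{\C^{\!2}}\otimes b)\circ\widetilde{\a}$; unwinding the definitions of $\widetilde{\a},\widetilde{\a}'$ and solving for $\a'$ gives $\a'=\bigl((a_2a_1^{-1})\otimes b\bigr)\circ\a$. Finally, since $(\l a_0)\otimes(\l^{-1}b)=a_0\otimes b$ for every $\l\in\C\!\setminus\{0\}$, I would rescale $a_0:=a_2a_1^{-1}$ by a square root of $(\det a_0)^{-1}$ to replace it by some $a\in{\rm SL}(2,\C\!)$, absorbing $\l^{-1}$ into $b$; this produces the desired pair $(a,b)$.

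It remains to identify the ambiguity. If $(a,b)$ and $(a',b')$ both satisfy $\a'=(a\otimes b)\circ\a=(a'\otimes b')\circ\a$ with $a,a'\in{\rm SL}(2,\C\!)$, then, $\a$ being an isomorphism, $a\otimes b=a'\otimes b'$, hence $c\otimes d={\rm Id}$ with $c:=a'^{-1}a$ and $d:=b'^{-1}b$. The main point—and the only step that is not pure bookkeeping—is the elementary rigidity of decomposable maps: when $F\neq0$, the equality $c\otimes d={\rm Id}_{\C^{\!2}\otimes F}$ forces $c=\mu\,{\rm Id}_{\C^{\!2}}$ and $d=\mu^{-1}{\rm Id}_F$ for a single scalar $\mu$. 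Since $c\in{\rm SL}(2,\C\!)$ imposes $\mu^2=1$, we get $\mu=\pm1$ and hence $(a,b)=\mu\,(a',b')=\pm(a',b')$. As $E\cong\C^{\!2}\otimes F$ is nontrivial precisely when $F\neq0$, this is exactly the stated conclusion.
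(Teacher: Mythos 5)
Your proof is correct and takes essentially the same route as the paper: the paper's entire proof of this corollary is the one-line remark that it is ``a quick consequence of Proposition \ref{prop:linear_hyper_classif}'', and your argument---converting the quaternionic isomorphisms into genuine hypercomplex ones via $a_i^{-1}\otimes{\rm Id}$, invoking the uniqueness clause of that proposition, rescaling $a_2a_1^{-1}$ into $\mathrm{SL}(2,\mathbb{C})$, and settling the $\pm$ ambiguity by the rigidity of decomposable maps $c\otimes d={\rm Id}$---is exactly the filling-in of those details. No gaps.
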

\begin{proof} 
This is a quick consequence of Proposition \ref{prop:linear_hyper_classif}\,. 
\end{proof} 

\indent 
It is useful to reformulate Corollary \ref{cor:linear_q_classif}\,, as follows, where $\C\!P^1$ is embedded as the conic of nilpotent elements of 
$\mathfrak{sl}(2,\C\!)$\,.   

\begin{cor} \label{cor:exact_seq_linear_q_classif} 
Let $[\s]$ be a linear quaternionic structure on $E$\,. Then $\dim E=2k$\,, for some $k\in\mathbb{N}\setminus\{0\}$\,, and, on 
associating to any $A\in\mathfrak{sl}(2,\C\!)\setminus\{0\}$ with $A^2=0$ the kernel of $\s(A)$\,, we obtain a well defined (in particular, depending only of $[\s]$\,)
map $\zeta:\C\!P^1\to{\rm Gr}_k(E)$ with the following properties:\\ 
\indent 
\quad{\rm (i)} $\zeta$ is an embedding,\\  
\indent 
\quad{\rm (ii)} the pull back through $\zeta$ of the tautological exact sequence of vector bundles over ${\rm Gr}_k(E)$ gives 
\begin{equation} \label{e:exact_seq_linear_q_classif} 
0\longrightarrow\ol(-1)\otimes F\longrightarrow\ol\otimes E\longrightarrow\ol(1)\otimes F\longrightarrow0\;,  
\end{equation}
for some vector space $F$.\\  
\indent 
Conversely, any map from $\C\!P^1$ to ${\rm Gr}_k(E)$ satisfying {\rm (i)} and {\rm (ii)} is obtained this way from a unique linear quaternionic structure. 
\end{cor}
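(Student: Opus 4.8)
The plan is to reduce everything to the model $E=\mathbb{C}^2\otimes F$ provided by Corollary \ref{cor:linear_q_classif}. First I would fix a quaternionic linear isomorphism $\a\colon E\to\mathbb{C}^2\otimes F$ and transport the whole statement along $\a$. Since $\s$ is injective (so $E\neq0$) and $\dim(\mathbb{C}^2\otimes F)=2\dim F$, this at once gives $\dim E=2k$ with $k=\dim F\geq1$. Under $\a$ a representative $\s$ becomes $A\mapsto A\otimes\mathrm{Id}_F$, whence $\ker\s(A)=(\ker A)\otimes F$ for every nilpotent $A\in\mathfrak{sl}(2,\mathbb{C})$ with $A^2=0$. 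As $\ker A$ is a line in $\mathbb{C}^2$ depending only on the projective class of $A$, and $[A]\mapsto\ker A$ identifies the conic of nilpotents with $\mathbb{P}(\mathbb{C}^2)$, the assignment $A\mapsto\ker\s(A)$ descends to a map $\zeta\colon\mathbb{C}P^1\to\mathrm{Gr}_k(E)$ which, read through $\a$, is simply $\ell\mapsto\ell\otimes F$.

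Next I would check (i) and (ii) in this model. The map $\ell\mapsto\ell\otimes F$ is a closed embedding of $\mathbb{P}(\mathbb{C}^2)$ into $\mathrm{Gr}_k(\mathbb{C}^2\otimes F)$: it is injective on points, since $F\neq0$ allows one to recover $\ell$ from $\ell\otimes F$, and it is an immersion; this is (i). For (ii), the tautological subbundle of $\mathrm{Gr}_k(E)$ restricts along $\zeta$ to the bundle with fibre $\ell\otimes F$ over $\ell$, that is, to $\ol(-1)\otimes F$ for the tautological $\ol(-1)$ on $\mathbb{P}(\mathbb{C}^2)=\mathbb{C}P^1$; the middle term is the trivial bundle $\ol\otimes E$, and the quotient is $\bigl(\ol\otimes\mathbb{C}^2\,/\,\ol(-1)\bigr)\otimes F=\ol(1)\otimes F$, which is precisely \eqref{e:exact_seq_linear_q_classif}. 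Here I would also record the naturality behind the parenthetical claim: replacing $\s$ by $\s\circ\mathrm{Ad}\,a$ replaces $\zeta$ by $\zeta\circ\overline{\mathrm{Ad}\,a}$, and as $\overline{\mathrm{Ad}\,a}$ exhausts $\mathrm{Aut}(\mathbb{C}P^1)$ the resulting curve, and the correspondence below, depend only on $[\s]$.

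For the converse I would start from the pulled-back sequence \eqref{e:exact_seq_linear_q_classif} supplied by (i) and (ii) and reconstruct the structure cohomologically. Taking $H^0$ over $\mathbb{C}P^1$ and using $H^0\bigl(\ol(-1)\otimes F\bigr)=H^1\bigl(\ol(-1)\otimes F\bigr)=0$, the middle term yields $E=H^0(\ol\otimes E)\cong H^0\bigl(\ol(1)\bigr)\otimes F$; since $H^0\bigl(\mathbb{C}P^1,\ol(1)\bigr)$ is two-dimensional and is identified with the dual of $\mathbb{C}^2$ through the conic, this produces an isomorphism $E\cong\mathbb{C}^2\otimes F$ and hence a linear quaternionic structure $[\s]$ on $E$. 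I would then verify that the $k$-plane $\ker\s(A)=(\ker A)\otimes F$ recovered from $[\s]$ is exactly the fibre of $\ol(-1)\otimes F$ over $[A]$, i.e.\ $\zeta([A])$, so that $[\s]$ induces the given $\zeta$; uniqueness of $[\s]$ then follows at once from the uniqueness clause of Corollary \ref{cor:linear_q_classif}, applied to two isomorphisms $E\cong\mathbb{C}^2\otimes F$ realizing the same subbundle.

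The step I expect to be the main obstacle is this reconstruction: extracting from the abstract sequence \eqref{e:exact_seq_linear_q_classif} an honest isomorphism $E\cong\mathbb{C}^2\otimes F$ that is canonical up to the $\mathrm{SL}(2,\mathbb{C})\times\mathrm{GL}(F)$-ambiguity (and the sign) of Corollary \ref{cor:linear_q_classif}, and confirming that it is genuinely inverse to the forward construction. In particular one must control the twist $\mathbb{C}^2$ versus $(\mathbb{C}^2)^*$ arising from $H^0\bigl(\ol(1)\bigr)$, checking that the freedom in identifying them --- a symplectic form on $\mathbb{C}^2$, unique up to scale --- is absorbed into the equivalence defining the quaternionic structure, so that the recovered $[\s]$ is indeed well defined and unique.
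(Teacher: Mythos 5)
Your proposal is correct and takes essentially the same route as the paper: the paper's entire written proof is precisely your cohomological reconstruction for the converse (taking $H^0$ of \eqref{e:exact_seq_linear_q_classif} and using the vanishing of $H^0$ and $H^1$ of $\ol(-1)\otimes F$ to get $E=H^0\bigl(\ol(1)\otimes F\bigr)=\C^{\!2}\otimes F$), with the forward direction and uniqueness left implicit as consequences of Corollary \ref{cor:linear_q_classif}, which is exactly how you supply them. Your additional care --- noting that changing the representative $\s\mapsto\s\circ{\rm Ad}\,a$ reparametrizes $\zeta$ by $\overline{{\rm Ad}\,a}$, and controlling the twist $H^0\bigl(\ol(1)\bigr)\cong(\C^{\!2})^*$ in the reconstruction --- fills in details the paper glosses over, and both points resolve the way you indicate.
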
 
\begin{proof} 
If $\zeta:\C\!P^1\to{\rm Gr}_k(E)$ satisfies (i) and (ii) then the cohomology sequence of \eqref{e:exact_seq_linear_q_classif} 
gives $E=H^0\bigl(\ol(1)\otimes F\bigr)=\C^{\!2}\otimes F$\,. 
\end{proof} 

\indent 
The correspondence of Corollary \ref{cor:exact_seq_linear_q_classif} is functorial in an obvious way.\\ 
\indent 
Let $\zeta:\C\!P^1\to{\rm Gr}_k(E)$ be the embedding corresponding to a linear quaternionic structure on $E$\,, $\dim E=2k$\,. 
We denote $E_z=\zeta(z)$\,, for any $z\in\C\!P^1$. 

\begin{defn}[\cite{Pan-qgfs}] \label{defn:linear_ro_q}
A \emph{linear $\r$-quaternionic structure} on a vector space $U$ is a pair $(E,\r)$\,, where $E$ is a quaternionic vector space 
and $\r:E\to U$ is a linear map such that $E_z\cap{\rm ker}\r=\{0\}$\,, for any $z\in\C\!P^1$. A \emph{$\r$-quaternionic vector space}  
is a vector space endowed with a linear $\r$-quaternionic structure. 
\end{defn} 

\indent 
If in Definition \ref{defn:linear_ro_q}\,, $E$ is a hypercomplex vector space then we obtain the notion of \emph{$\r$-hypercomplex vector space}.\\ 
\indent 
Let $(E,\r)$ be a linear $\r$-quaternionic structure on $U$. Then $z\mapsto\r(E_z)$\,, $(z\in\C\!P^1)$\,, defines an embedding of $\C\!P^1$ into ${\rm Gr}_k(U)$\,, 
where $\dim E=2k$\,. Moreover, the `restriction' to $\C\!P^1$ of the tautological vector bundle over ${\rm Gr}_k(U)$ is (isomorphic to) $\ol(-1)\otimes F$, 
for some vector space $F$, and we, thus, obtain an exact sequence 
\begin{equation} \label{e:exact_seq_linear_ro_q} 
0\longrightarrow\ol(-1)\otimes F\longrightarrow\ol\otimes U\longrightarrow\mathcal{U}\longrightarrow0\;; 
\end{equation}  
in particular, $\mathcal{U}$ is nonnegative.\\ 
\indent 
Note that \cite{Pan-qgfs}\,, the exact sequence \eqref{e:exact_seq_linear_ro_q} determines $(E,\r)$\,. Indeed, the cohomology sequence of its dual gives a linear 
map from $H^0(\ol\otimes U^*)=U^*$ to $H^0\bigl(k\ol(1)\bigr)=E^*$ which is the transpose of $\r$\,. 
We call $\mathcal{U}$ \emph{the (holomorphic) vector bundle of $(U,E,\r)$}\,.\\  
\indent 
Conversely, as \cite{Qui-QJM98} any nonnegative vector bundle over the sphere is uniquely obtained through an exact sequence like \eqref{e:exact_seq_linear_ro_q}\,, 
we obtain a functorial correspondence between $\r$-quaternionic vector spaces and nonnegative vector bundles over the sphere. 
Note that, here, the morphisms of vector bundles are covering (holomorphic) diffeomorphisms of the sphere. If we restrict to morphisms covering the identity 
map we obtain the basic functorial correspondence between $\r$-hypercomplex vector spaces and nonnegative vector bundles over the sphere.

\section{The infinitesimal Ward transform} \label{section:inf_Ward_transform} 

\indent 
We start this section with the following simple fact which will be useful later on. 

\begin{prop} \label{prop:useful_nonnegative} 
Let $\Phi:\mathcal{U}'\to\mathcal{U}$ be a surjective morphism of vector bundles over the sphere whose kernel is nonnegative.\\ 
\indent 
Then $\mathcal{U}$ is nonnegative if and only if $\mathcal{U}'$ is nonnegative; equivalently, $\mathcal{U}$ is the vector bundle of a $\r$-quaternionic vector space 
if and only if $\mathcal{U}'$ is the vector bundle of a $\r$-quaternionic vector space. 
\end{prop}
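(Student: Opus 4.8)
The plan is to reduce everything to Grothendieck's splitting theorem on the sphere together with the cohomology of line bundles. Every holomorphic vector bundle $\mathcal{E}$ over $\C\!P^1$ splits as $\bigoplus_i\ol(a_i)$, and it is nonnegative exactly when all $a_i\geq0$. Since $H^1\bigl(\C\!P^1,\ol(m)\bigr)=0$ if and only if $m\geq-1$, twisting by $\ol(-1)$ turns this into the single cohomological criterion
\[
\mathcal{E}\ \text{is nonnegative}\iff H^1\bigl(\C\!P^1,\mathcal{E}(-1)\bigr)=0\;,
\]
where $\mathcal{E}(-1)=\mathcal{E}\otimes\ol(-1)$. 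Establishing this first is what will make both implications symmetric and essentially free.

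Before applying it I would dispose of a small point about the base. A morphism of vector bundles over the sphere is permitted to cover a holomorphic diffeomorphism $f$ of $\C\!P^1$; but $f^*\ol(k)\cong\ol(k)$, so pulling back preserves splitting type, and nonnegativity is an isomorphism invariant. Hence we may assume $\Phi$ covers the identity, so that $\mathcal{K}:=\ker\Phi$ is a genuine subbundle and we have a short exact sequence of holomorphic bundles over $\C\!P^1$,
\[
0\longrightarrow\mathcal{K}\longrightarrow\mathcal{U}'\overset{\Phi}{\longrightarrow}\mathcal{U}\longrightarrow0\;,
\]
with $\mathcal{K}$ nonnegative by hypothesis.

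Tensoring this sequence with $\ol(-1)$ (an exact operation) and taking the long exact cohomology sequence gives, since $H^2$ vanishes on a curve, the exact string
\[
H^1\bigl(\mathcal{K}(-1)\bigr)\longrightarrow H^1\bigl(\mathcal{U}'(-1)\bigr)\longrightarrow H^1\bigl(\mathcal{U}(-1)\bigr)\longrightarrow0\;.
\]
If $\mathcal{U}'$ is nonnegative, the middle term vanishes, forcing the right-hand term to vanish, so $\mathcal{U}$ is nonnegative. Conversely, if $\mathcal{U}$ is nonnegative, then, together with the nonnegativity of $\mathcal{K}$, both outer terms vanish, and exactness forces $H^1\bigl(\mathcal{U}'(-1)\bigr)=0$, so $\mathcal{U}'$ is nonnegative. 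The final ``equivalently'' clause is then immediate from the dictionary of Section \ref{section:ro_quaternionic_vector_spaces}, which identifies the nonnegative bundles over the sphere with the vector bundles of $\r$-quaternionic vector spaces.

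I do not expect a real obstacle here: the only step demanding care is the reduction to a short exact sequence over a fixed $\C\!P^1$, i.e.\ interpreting ``surjective morphism whose kernel is nonnegative'' so that $\mathcal{K}$ is locally free; after that the argument is just the cohomological criterion plus one long exact sequence. As a more elementary alternative, one can argue directly that a surjective image of a nonnegative bundle is nonnegative: a surjection $\bigoplus_i\ol(a_i)\to\ol(c)$ with all $a_i\geq0$ has components lying in $H^0\bigl(\ol(c-a_i)\bigr)$, which vanishes when $c<0$, contradicting surjectivity. This handles the first implication immediately, and the second follows because a projection of $\mathcal{U}'$ onto a negative summand kills $\mathcal{K}$ (no nonzero maps from a nonnegative bundle to a negative line bundle) and hence factors through $\mathcal{U}$; but the cohomological route is the cleaner of the two.
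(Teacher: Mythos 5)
Your proof is correct, but it takes a genuinely different route from the paper's. The paper characterizes nonnegativity by global generation — $\mathcal{E}$ is nonnegative iff every restriction map $r_z:H^0(\mathcal{E})\to\mathcal{E}_z$ is surjective — and then reduces the proposition to an elementary linear-algebra fact (Lemma \ref{lem:useful_surj}) about surjectivity through a quotient: its only cohomological input is that nonnegativity of the kernel makes $H^0(\mathcal{U}')\to H^0(\mathcal{U})$ surjective, after which both directions follow simultaneously from the identity $\Phi\circ r'_z=r_z\circ\phi$ and the lemma. You instead encode nonnegativity as the vanishing $H^1\bigl(\mathcal{E}\otimes\ol(-1)\bigr)=0$ (via Grothendieck splitting and $H^1\bigl(\ol(m)\bigr)=0$ for $m\geq-1$), twist the short exact sequence by $\ol(-1)$, and read off both implications from the resulting three-term exact sequence — shorter and pleasingly symmetric. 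What the paper's route buys is that it stays entirely inside the section-level picture ($U=H^0(\mathcal{U})$) that drives the $\r$-quaternionic dictionary used in the rest of the paper, so its proof simultaneously exhibits the exact sequence $0\to U''\to U'\to U\to0$ of the underlying vector spaces, which is what later arguments actually exploit; your route buys brevity, since both directions become one diagram chase. Two further points in your favour: your preliminary reduction to morphisms covering the identity (using $f^*\ol(k)\cong\ol(k)$ for an automorphism $f$ of $\C\!P^1$) addresses a subtlety the paper's conventions leave implicit, and your alternative elementary argument — no nonzero morphisms from a nonnegative bundle to a negative line bundle, so a projection onto a negative summand of $\mathcal{U}'$ would kill $\ker\Phi$ and descend to a surjection from $\mathcal{U}$ — is also sound.
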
 
\begin{proof} 
Let $\mathcal{U}''={\rm ker}\,\Phi$\,, and let $U$, $U'$ and $U''$ be the spaces of sections of $\mathcal{U}$\,, 
$\mathcal{U}'$, and $\mathcal{U}''$, respectively. The cohomology sequence of 
$0\longrightarrow\mathcal{U}''\longrightarrow\mathcal{U}'\overset{\Phi}{\longrightarrow}\mathcal{U}\longrightarrow0$ 
gives, because $\mathcal{U}''$ is nonnegative, an exact sequence of vector spaces 
$0\longrightarrow U''\longrightarrow U'\overset{\phi}{\longrightarrow}U\longrightarrow0$\,, 
where $\phi$ is a surjective linear map.\\ 
\indent 
We know that, for example, $\mathcal{U}$ is nonnegative if and only if, for any $z\in\C\!P^1$, the restriction map $r_z:U\to\mathcal{U}_z$\,, $s\mapsto s_z$\,, 
is surjective.\\ 
\indent 
Let $z\in\C\!P^1$, and denote by $r'_z$ and $r''_z$ the corresponding restriction maps of $\mathcal{U}'$ and $\mathcal{U}''$, respectively. 
As $\mathcal{U}''$ is nonnegative and the restriction of $r'_z$ to $U''$ is $r''_z$\,, we have $\mathcal{U}''_z\subseteq{\rm im}\,r'_z$\,.  
Hence, by Lemma \ref{lem:useful_surj}\,, below, $r'_z$ is surjective if and only if $\Phi\circ r'_z$ is surjective. 
On the other hand, as $\phi$ is surjective, $r_z$ is surjective if and only if $r_z\circ\phi$ is surjective. 
Together with $\Phi\circ r'_z=r_z\circ\phi$\,, this completes the proof. 
\end{proof} 

\begin{lem} \label{lem:useful_surj} 
Let $T:U\to V$ be a linear map and let $W\subseteq V$ be a vector subspace; denote by $p:V\to V/W$ the projection.\\ 
\indent  
Then $T$ is surjective if and only if $p\circ T$ is surjective and $W\subseteq{\rm im}\,T$. 
\end{lem} 
\begin{proof} 
Note that the condition $W\subseteq {\rm im}\,T$ is equivalent to the condition that the annihilator of $W$ contains the kernel 
of the transpose of $T$. Thus, the dual of the conclusion reads: $T$ is injective if and only if $T|_W$ is injective and 
${\rm ker}\,T\subseteq W$. As this is obvious, the proof is complete. 
\end{proof}  
  
\begin{prop} \label{prop:equiv_trivial-q_isom}
Let $(U,E,\r)$ and $(U',E',\r')$ be $\r$-quaternionic vector spaces and let $\mathcal{U}$ and $\mathcal{U}'$ be their vector bundles, respectively.\\ 
\indent 
Let $\phi:U'\to U$ be a surjective $\r$-quaternionic linear map. Denote by $\widetilde{\phi}:E'\to E$ the quaternionic linear map, and by 
$\Phi:\mathcal{U}'\to\mathcal{U}$ the (surjective) morphism of vector bundles corresponding to $\phi$\,.\\ 
\indent 
Then the following assertions are equivalent:\\ 
\indent 
\quad{\rm (i)} $\widetilde{\phi}$ is an isomorphism,\\ 
\indent 
\quad{\rm (ii)} $\dim E=\dim E'$ and $E'_z\cap{\rm ker}\,\phi=\{0\}$\,, for any $z\in\C\!P^1$.\\  
\indent 
\quad{\rm (iii)} ${\rm ker}\,\Phi$ is a trivial vector bundle.\\ 
\indent 
Moreover, if {\rm (i)}\,, {\rm (ii)} or {\rm (iii)} hold then ${\rm ker}\,\Phi=\ol\otimes{\rm ker}\,\phi$\,.  
\end{prop}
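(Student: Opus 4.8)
The plan is to reduce the whole statement to a single commutative ladder of the two defining exact sequences and then read off the three conditions via the snake lemma. First I would fix identifications $E=\C^{\!2}\otimes F$ and $E'=\C^{\!2}\otimes F'$ as in Corollary \ref{cor:linear_q_classif}, under which $E_z=\ell_z\otimes F$ and $E'_z=\ell_z\otimes F'$, where $z\mapsto\ell_z$ is the tautological $\ol(-1)$; then the subbundles appearing in \eqref{e:exact_seq_linear_ro_q} for $\mathcal{U}$ and $\mathcal{U}'$ are $\ol(-1)\otimes F$ and $\ol(-1)\otimes F'$, included via $\r$ and $\r'$. Composing, if necessary, with the automorphism of $\C\!P^1$ underlying $\widetilde{\phi}$ (harmless, since it preserves the degree of any bundle and the vanishing $H^0\bigl(\ol(-1)\bigr)=0$), I would arrange that $\Phi$ covers the identity. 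Then $\widetilde{\phi}$ is a morphism of $\mathfrak{gl}(2,\C\!)$-modules, so by Schur's lemma $\widetilde{\phi}={\rm Id}_{\C^{\!2}}\otimes\psi$ for a single linear map $\psi:F'\to F$, and the relation $\phi\circ\r'=\r\circ\widetilde{\phi}$ produces a commutative diagram of the two sequences whose vertical arrows are $\ol(-1)\otimes\psi$, $\ol\otimes\phi$ and $\Phi$.

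Since $\phi$ and $\Phi$ are surjective, I would then apply the snake lemma to this ladder to obtain the exact sequence
\begin{equation*}
0\longrightarrow\ol(-1)\otimes{\rm ker}\,\psi\longrightarrow\ol\otimes{\rm ker}\,\phi\longrightarrow{\rm ker}\,\Phi\longrightarrow\ol(-1)\otimes{\rm coker}\,\psi\longrightarrow0\;.
\end{equation*}
From here the equivalences should fall out. For (i)$\Rightarrow$(iii) together with the last assertion, $\widetilde{\phi}$ being an isomorphism makes $\psi$ an isomorphism, killing the two outer terms and giving ${\rm ker}\,\Phi=\ol\otimes{\rm ker}\,\phi$, which is trivial. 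For (iii)$\Rightarrow$(i), I would use $H^0\bigl(\ol(-1)\bigr)=0$ to see that any morphism $\ol\to\ol(-1)$ vanishes; hence the arrow ${\rm ker}\,\Phi\to\ol(-1)\otimes{\rm coker}\,\psi$ is zero, and its surjectivity forces ${\rm coker}\,\psi=0$. The sequence then reduces to a short exact sequence, and comparing degrees (all terms but $\ol(-1)\otimes{\rm ker}\,\psi$ having degree $0$) gives ${\rm ker}\,\psi=0$, so $\psi$, and thus $\widetilde{\phi}$, is an isomorphism.

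It remains to check (i)$\Leftrightarrow$(ii). Here I would observe that ${\rm ker}\,\widetilde{\phi}=\C^{\!2}\otimes{\rm ker}\,\psi$ is a quaternionic subspace, so $E'_z\cap{\rm ker}\,\widetilde{\phi}=\ell_z\otimes{\rm ker}\,\psi$ for every $z$, and that the intersection condition in (ii), read inside $U'$ via $\r'$, transports to $E'$ through the identity $\r'\bigl(E'_z\bigr)\cap{\rm ker}\,\phi=\r'\bigl(E'_z\cap{\rm ker}\,\widetilde{\phi}\bigr)$, which follows from $\phi\circ\r'=\r\circ\widetilde{\phi}$ and the injectivity of $\r$ and $\r'$ on the fibres $E_z$ and $E'_z$. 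Since $\widetilde{\phi}$ is an isomorphism precisely when $\dim E=\dim E'$ and $\psi$ is injective, this yields (i)$\Leftrightarrow$(ii).

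The step I expect to require the most care is the initial reduction: justifying that one may pass to a morphism covering the identity, that Schur's lemma then forces $\widetilde{\phi}={\rm Id}_{\C^{\!2}}\otimes\psi$, and above all that the leftmost square of the ladder genuinely commutes with vertical map $\ol(-1)\otimes\psi$ (this is where the compatibility $\phi\circ\r'=\r\circ\widetilde{\phi}$ of the $\r$-structures is used). Once the ladder is correctly set up, the snake lemma together with the two elementary facts $H^0\bigl(\ol(-1)\bigr)=0$ and additivity of degree makes the remaining deductions routine.
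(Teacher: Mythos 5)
Your proof is correct, but it takes a genuinely different route from the paper's. After your two reductions --- normalizing so that $\Phi$ covers the identity and invoking Schur's lemma to write $\widetilde{\phi}={\rm Id}_{\C^{\!2}}\otimes\psi$ for some $\psi:F'\to F$ --- you encode everything in the ladder of the two defining sequences and let the snake lemma produce the four-term exact sequence, from which (i)$\Leftrightarrow$(iii) follows by degree additivity together with ${\rm Hom}\bigl(\ol,\ol(-1)\bigr)=0$\,, and (i)$\Leftrightarrow$(ii) from the transport identity $\r'(E'_z)\cap{\rm ker}\,\phi=\r'\bigl(E'_z\cap{\rm ker}\,\widetilde{\phi}\bigr)$\,. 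The paper argues more intrinsically and more briefly: it observes that (i) and (ii) are each equivalent to $\phi$ mapping each $E'_z$ isomorphically onto $E_z$ (up to a diffeomorphism of the sphere), notes that ${\rm ker}\,\phi$ is precisely the space of global sections of ${\rm ker}\,\Phi$\,, and then characterizes triviality of a bundle over the sphere by two section-theoretic conditions --- nonzero sections vanish nowhere, and the dimension of the space of sections equals the rank --- which match exactly the two conditions in (ii). What the paper's route buys is economy: no choice of identifications $E=\C^{\!2}\otimes F$, no normalization of the covering M\"obius transformation, and nothing homological beyond $H^0$. What your route buys is more information: the snake-lemma sequence describes ${\rm ker}\,\Phi$ even when (i)--(iii) fail, it makes the identification ${\rm ker}\,\Phi=\ol\otimes{\rm ker}\,\phi$ canonical rather than an afterthought, and the commutative ladder you set up is essentially the diagram the paper itself deploys immediately afterwards in the proof of Theorem \ref{thm:infinitesimal_Ward_transform}\,, so your argument dovetails naturally with what follows.
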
 
\begin{proof} 
Firstly, note that both (i) and (ii) are equivalent to the fact that (up to a diffeomorphism of the sphere) $\phi$ maps each $E'_z$ isomorphically onto $E_z$\,.\\  
\indent 
Now, note that, ${\rm ker}\,\phi$ is the space of sections of ${\rm ker}\,\Phi$\,. Thus,  $E'_z\cap{\rm ker}\,\phi=\{0\}$\,, for any $z\in\C\!P^1$, 
if and only if the following fact holds: (a) the nonzero sections of ${\rm ker}\,\Phi$ are nonzero at each point.\\ 
\indent 
Also, $\dim E=\dim E'$ is equivalent to the following fact: (b) the dimension of ${\rm ker}\,\phi$ is equal to the rank of ${\rm ker}\,\Phi$\,.\\ 
\indent 
To complete the proof of the equivalence of (i)\,, (ii) and (iii)\,, just note that (iii) holds if and only if both (a) and (b) hold. 
\end{proof} 

\indent 
Note that, if $(E,\r)$ is a linear $\r$-quaternionic structure on $U$ then $\r:E\to U$ is a $\r$-quaternionic linear map. 
Thus, if $\mathcal{E}$ and $\mathcal{U}$ are the vector bundles of $E$ and $U$, respectively, then $\r$ corresponds to 
a morphism of vector bundles $\mathcal{R}:\mathcal{E}\to\mathcal{U}$ whose kernel and cokernel are $\ol\otimes{\rm ker}\r$ and $\ol\otimes{\rm coker}\r$\,, 
respectively. Consequently, the following assertions are equivalent:\\ 
\indent 
\quad(i) $\r$ is surjective,\\  
\indent 
\quad(ii) $\mathcal{R}$ is surjective,\\ 
\indent 
\quad(iii) $\mathcal{U}$ is positive.\\
Thus, if $\r$ is surjective then 
$0\longrightarrow\ol\otimes{\rm ker}\r\longrightarrow\mathcal{E}\overset{\mathcal{R}}{\longrightarrow}\mathcal{U}\longrightarrow0$\,,  
and the cohomology sequence of the dual of this exact sequence gives $({\rm ker}\r)^*=H^1\bigl(\mathcal{U}^*\bigr)$\,.\\ 
\indent 
The first part of the next result can be seen as the infinitesimal Ward transform.  

\begin{thm} \label{thm:infinitesimal_Ward_transform} 
Let $0\longrightarrow\mathcal{U}''\longrightarrow\mathcal{U}'\overset{\Phi}{\longrightarrow}\mathcal{U}\longrightarrow0$ 
be an exact sequence of holomorphic vector bundles over the sphere with $\mathcal{U}$ nonnegative and $\mathcal{U}''$ trivial.\\  
\indent 
Then $\mathcal{U}'$ is nonnegative and, if $(U,E,\r)$ and $(U',E',\r')$ are the $\r$-quaternionic vector spaces corresponding 
to $\mathcal{U}$ and $\mathcal{U}'$, respectively, then the $\r$-quaternionic linear map $\phi:U\to U'$ determined by $\Phi$ 
is surjective and induced by a quaternionic linear isomorphism $E=E'$\,; moreover, any such $\r$-quaternionic linear map is 
obtained this way from a surjective morphism of nonnegative vector bundles whose kernel is trivial.\\ 
\indent 
Furthermore, if $\mathcal{U}$ is positive then, under the linear isomorphisms $E=E'$ and $({\rm ker}\r)^*=H^1\bigl(\mathcal{U}^*\bigr)$\,, 
the linear map $\r'|_{{\rm ker}\r}$ corresponds to the opposite of the cohomology class of the given exact sequence;  
also, $U'$ is a quaternionic vector space (equal to $E'$) if and only if $\r'|_{{\rm ker}\r}$ is an isomorphism. 
\end{thm}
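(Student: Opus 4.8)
The plan is to exploit systematically the functorial correspondence of Corollary \ref{cor:exact_seq_linear_q_classif} and the surrounding discussion, turning all statements about $\r$-quaternionic vector spaces into statements about nonnegative vector bundles over $\C\!P^1$ and their cohomology. First I would prove that $\mathcal{U}'$ is nonnegative: since $\mathcal{U}''$ is trivial it is in particular nonnegative, so Proposition \ref{prop:useful_nonnegative} applies directly to the surjection $\Phi:\mathcal{U}'\to\mathcal{U}$ and yields that $\mathcal{U}'$ is nonnegative precisely because $\mathcal{U}$ is. Consequently $\mathcal{U}'$ is the vector bundle of a $\r$-quaternionic vector space $(U',E',\r')$, and the functoriality of the correspondence (covering the identity map of the sphere) produces a $\r$-quaternionic linear map $\phi$ from $\Phi$. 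The fact that $\phi$ is surjective follows from the exactness of the cohomology sequence of $0\to\mathcal{U}''\to\mathcal{U}'\to\mathcal{U}\to0$, using that $\mathcal{U}''$ being trivial has no $H^1$, exactly as in the first paragraph of the proof of Proposition \ref{prop:useful_nonnegative}.

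Next I would show that $\phi$ is induced by a quaternionic linear isomorphism $E=E'$. The natural route is through Proposition \ref{prop:equiv_trivial-q_isom}: the kernel of $\Phi$ is $\mathcal{U}''$, which is trivial by hypothesis, so condition (iii) of that proposition holds and therefore condition (i) holds, i.e.\ the induced quaternionic linear map $\widetilde{\phi}:E'\to E$ is an isomorphism. (One must check the roles of source and target match: here $\Phi$ goes from $\mathcal{U}'$ onto $\mathcal{U}$, so the associated $\r$-quaternionic linear map runs $U\to U'$ in the theorem's convention, but the quaternionic isomorphism on the $E$-level is the same datum read either way.) For the converse direction, given any surjective $\r$-quaternionic linear map $\phi:U\to U'$ induced by a quaternionic isomorphism $E=E'$, I would run the functorial correspondence backwards: $\phi$ produces a surjective morphism $\Phi$ of the associated nonnegative bundles, and Proposition \ref{prop:equiv_trivial-q_isom} (i)$\Rightarrow$(iii) guarantees that $\ker\Phi$ is trivial, so the exact sequence $0\to\ker\Phi\to\mathcal{U}'\to\mathcal{U}\to0$ has the required shape.

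For the final paragraph, I would specialize to $\mathcal{U}$ positive, which by the discussion preceding the theorem is equivalent to $\r$ being surjective, giving the short exact sequence $0\to\ol\otimes\ker\r\to\mathcal{E}\overset{\mathcal{R}}{\to}\mathcal{U}\to0$ and hence the identification $(\ker\r)^*=H^1(\mathcal{U}^*)$ from the cohomology sequence of its dual. The key computation is to identify $\r'|_{\ker\r}$ with (the negative of) the extension class of $0\to\mathcal{U}''\to\mathcal{U}'\to\mathcal{U}\to0$ in $H^1(\mathcal{U}^*\otimes\mathcal{U}'')=H^1(\mathcal{U}^*)\otimes U''$, using $\mathcal{U}''$ trivial with space of sections $U''=\ker\phi$. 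I would compute this by chasing the connecting homomorphism: under $E=E'$ the bundle $\mathcal{U}'$ sits over $\mathcal{E}=\mathcal{E}'$, and $\r'$ restricted to $\ker\r\subseteq E=E'$ measures exactly how a section of $\ker\r$, which maps to zero in $\mathcal{U}$, lifts into $\mathcal{U}'$ and projects to $\mathcal{U}''$; this is the Bockstein/connecting map, hence the extension class, and the sign is fixed by the orientation conventions in the dual cohomology sequences. The assertion that $U'$ is a genuine quaternionic vector space (i.e.\ $\r'$ is an isomorphism, so $U'=E'$) exactly when $\r'|_{\ker\r}$ is an isomorphism then follows because $\r'$ restricted to the image of $\phi$ is already controlled by $\r$ via $\r\circ(\text{section})$, so the only remaining obstruction to $\r'$ being bijective lives on $\ker\r$.

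I expect the main obstacle to be the precise identification of $\r'|_{\ker\r}$ with the cohomology class, including the correct sign: it requires a careful diagram chase comparing the connecting homomorphism of the extension with the cohomology sequence defining $(\ker\r)^*=H^1(\mathcal{U}^*)$, and keeping track of the dualizations and of the convention that morphisms cover the identity of the sphere. The functoriality and the appeals to Propositions \ref{prop:useful_nonnegative} and \ref{prop:equiv_trivial-q_isom} are essentially formal once the correspondence is in place; it is the cohomological bookkeeping in the positive case that carries the real content.
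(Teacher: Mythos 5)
Your proposal is correct and, in its main lines, coincides with the paper's own proof: nonnegativity of $\mathcal{U}'$ via Proposition \ref{prop:useful_nonnegative}\,, the identification $E=E'$ and the converse statement via Proposition \ref{prop:equiv_trivial-q_isom}\,, and, when $\mathcal{U}$ is positive, the comparison of the given extension with the tautological sequence $0\longrightarrow\ol\otimes{\rm ker}\r\longrightarrow\mathcal{E}\longrightarrow\mathcal{U}\longrightarrow0$ through the morphism $\mathcal{E}\to\mathcal{U}'$ induced by $\r'$ under $E=E'$; your ``connecting homomorphism chase'' is exactly the paper's commutative diagram of these two short exact sequences, dualized, plus naturality of the cohomology sequences. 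Two local differences are worth recording. First, the sign is the one point where your plan is genuinely incomplete: the isomorphism $({\rm ker}\r)^*=H^1\bigl(\mathcal{U}^*\bigr)$ is normalized via the \emph{dual} of the tautological sequence, so your chase naturally yields the class of the \emph{dual} of the given extension, and converting this into ``the opposite of the class of the given extension'' requires the fact that dualizing a short exact sequence replaces its cohomology class by minus its transpose; the paper obtains this from \cite[Proposition 3]{At-57}\,, and you need to invoke or prove it --- ``orientation conventions'' is not an argument. Second, for the final equivalence you argue by pure linear algebra: made precise, this is the five (and snake) lemma applied to the rows $0\to{\rm ker}\r\to E\to U\to0$ and $0\to{\rm ker}\,\phi\to U'\to U\to0$ with vertical arrows $\r'|_{{\rm ker}\r}$\,, $\r'$ and ${\rm id}_U$ (commutativity being $\phi\circ\r'=\r$\,); this is valid and more elementary than the paper's argument, which instead notes that the connecting map $o$ is an isomorphism if and only if $H^0\bigl(\mathcal{U}'^*\bigr)=H^1\bigl(\mathcal{U}'^*\bigr)=0$\,, i.e., if and only if $\mathcal{U}'=\ol(1)\otimes F$ --- but do state it as a diagram lemma, because your phrase ``$\r'$ restricted to the image of $\phi$'' does not parse ($\r'$ is defined on $E'$, not on a subspace of $U$ or $U'$). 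Finally, your worry about directions is justified: the theorem's ``$\phi:U\to U'$'' is a slip for $\phi:U'\to U$, which is what the paper's proof, Proposition \ref{prop:equiv_trivial-q_isom} and the relation $\phi\circ\r'=\r$ all require.
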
 
\begin{proof} 
Proposition \ref{prop:useful_nonnegative} gives that $\mathcal{U}'$ is nonnegative, whilst Proposition \ref{prop:equiv_trivial-q_isom} 
gives the identification $E=E'$ and the fact that $\mathcal{U}''=\ol\otimes{\rm ker}\,\phi$\,.\\ 
\indent 
As (under $E=E'$) we have $\phi\circ\r'=\r$ we, also, have $\r'({\rm ker}\r)\subseteq{\rm ker}\,\phi$\,.  
Thus, if $\mathcal{U}$ is positive and we denote by $\mathcal{E}$ the vector bundle of $E$\,, we obtain the following commutative diagram 

\begin{displaymath} 
\xymatrix{
   0  \ar[r]   &  \ol\otimes{\rm ker}\r   \ar[r] \ar[d]    &  \mathcal{E} \ar[r] \ar[d]        &   \mathcal{U} \ar[r] \ar[d]^=    &       0  \\
   0  \ar[r]   &  \ol\otimes{\rm ker}\,\phi \ar[r]          &  \mathcal{U}' \ar[r]^{\Phi}     &   \mathcal{U} \ar[r]                  &   0 \;,     }
\end{displaymath} 

\noindent 
which, through the cohomology sequences of the rows of its dual, gives 

\begin{displaymath} 
\xymatrix{
          &          0                                             &  H^1\bigl(\mathcal{U}^*\bigr) \ar[l]                  &    ({\rm ker}\r)^*    \ar[l]_{\;\;\simeq}     &       0  \ar[l]      &  \\
   0    &    H^1\bigl(\mathcal{U}'^*\bigr) \ar[l] \ar[u] &  H^1\bigl(\mathcal{U}^*\bigr) \ar[l]  \ar[u]^=   &   ({\rm ker}\,\phi)^*  \ar[l]_{\;\;o}  \ar[u] 
   &  H^0\bigl(\mathcal{U}'^*\bigr) \ar[l]  \ar[u] &   0 \;, \ar[l]   }
\end{displaymath} 

\noindent 
where, because $\ol\otimes({\rm ker}\,\phi)^*$ is trivial, the linear map $o:({\rm ker}\,\phi)^*\to H^1(\mathcal{U}^*)$ is,  
up to the equality of ${\rm Hom}\bigl(({\rm ker}\,\phi)^*, H^1\bigl(\mathcal{U}^*\bigr)\bigr)$ 
and $H^1\bigl({\rm Hom}\bigl(\ol\otimes({\rm ker}\,\phi)^*, \mathcal{U}^*\bigr)\bigr)$\,, the  
cohomology class of the dual of the given exact sequence (determined by $\Phi$\,). As this cohomology class is the opposite (of the transpose) 
of the cohomology class of the given sequence \cite[Proposition 3]{At-57}\,, it remains to prove the last statement.  
But $o$ is an isomorphism if and only if the cohomology groups of the dual of $\mathcal{U}'$ are zero. As the latter condition is equivalent 
to $\mathcal{U}'=\ol(1)\otimes F$, for some vector space $F$, the proof is complete. 
\end{proof} 

\indent 
As a short exact sequence of vector bundles is determined, only up to an equivalence class, by its cohomology class (see \cite{At-57}\,), it is, obviously, 
useful to work with cocycles, instead. This way, we obtain another description of the isomorphism 
$({\rm ker}\r)^*=H^1\bigl(\mathcal{U}^*\bigr)$, where $(U,E,\r)$ is a $\r$-quaternionic vector space, with $\r$ surjective, and $\mathcal{U}$ 
is its vector bundle. 

\begin{rem} \label{rem:cohomology_for_pluriharmonic} 
Let $\mathcal{U}$ be a positive vector bundle over the sphere and let $(U,E,\r)$ be the corresponding $\r$-quaternionic vector space. 
Then $U=H^0(\mathcal{U})$ and, on fixing a linear quaternionic isomorphism $E=\C^{\!2}\otimes F$, the dual of \eqref{e:exact_seq_linear_ro_q} gives 
\begin{equation} \label{e:cohomology_for_pluriharmonic} 
0\longrightarrow\mathcal{U}^*\overset{\iota}{\longrightarrow}\ol\otimes U^*\overset{\t}{\longrightarrow}\ol(1)\otimes F^*\longrightarrow0\;.  
\end{equation}   
\indent 
Identify the sphere with $\C\sqcup\{\infty\}$ and take a cover of it $\{D_0,D_{\infty}\}$ such that $D_0$ is an open disk with centre $0$ 
containing the unit circle $S^1$ (with centre $0$), and $D_{\infty}$ is the complement of a closed disk with centre $0$ and which is disjoint from $S^1$; 
in particular, $D_0\cap D_{\infty}$ is an open annulus containing $S^1$.\\ 
\indent 
Any element of $H^1\bigl(\mathcal{U}^*\bigr)$ is represented by a section $h$ of $\mathcal{U}^*$ 
over $D_0\cap D_{\infty}$\,. Then there exist unique holomorphic maps $h_0:D_0\to U^*$ and $h_{\infty}:D_{\infty}\to U^*$ 
such that $h_{\infty}(\infty)=0$ and $\iota\circ h=h_0-h_{\infty}$ on $D_0\cap D_{\infty}$\,. 
By using the Cauchy formula we deduce that, up to a constant factor, we have  
$$\int_0^{2\p}(\iota\circ h)(\g(t))\dif\!t=h_0(0)\;,$$ 
where $\g$ is the canonical parametrization of $S^1$.\\ 
\indent 
Now, there is a unique $\xi\in E^*$ whose corresponding section $s_{\xi}$ of $\ol(1)\otimes F^*$ is equal to $\t\circ h_0$ at $0$ and is zero at $\infty$\,.  
Equivalently, as $h_0$ and $h_{\infty}$ differ by a section of ${\rm ker}\,\t$, on the intersection of their domains, $\t\circ h_0$ and $\t\circ h_{\infty}$ 
are the restrictions of $s_{\xi}$ to $D_0$ and $D_{\infty}$\,, respectively.\\ 
\indent 
The space of sections of $\ol(1)\otimes F^*$ which are zero at $\infty$ is supplementary to the image of $H^0(\t)$\,. 
Consequently, $h\mapsto\a(h)=\int_0^{2\p}(\iota\circ h)(\g(t))\dif\!t$ is a natural (up to the choice of the `Poles') lift of the isomorphism 
$H^1\bigl(\mathcal{U}^*\bigr)=E^*/U^*\bigl(=({\rm ker}\r)^*\bigr)$ 
determined by the cohomology sequence of \eqref{e:cohomology_for_pluriharmonic}\,. 
Finally, note that, the element of $({\rm ker}\r)^*$ corresponding to the cohomology class $[h]$ is 
the image of $\a(h)$ through a surjective linear map $p:U^*\to({\rm ker}\r)^*$, whose kernel 
is $\bigl(U^*\cap{\rm Ann}(E_0)\bigr)+\bigl(U^*\cap{\rm Ann}(E_{\infty})\bigr)$\,, where $U^*$ is embedded into $E^*$ through $\r^*$. 
\end{rem}

\section{The Ward transform} \label{section:Ward_transform} 

\indent 
A \emph{bundle of associative algebras} is a vector bundle whose typical fibre is an associative algebra and whose structural group is the automorphism group of that algebra.\\ 
\indent   
A \emph{quaternionic vector bundle} is a vector bundle $E$ endowed with a pair $(\mathcal{A},\s)$\,, where $\mathcal{A}$ is a bundle of associative algebras, 
with typical fibre ${\rm gl}(2,\C\!)$\,, and $\s:\mathcal{A}\to{\rm End}E$ is a morphism of bundles of associative algebras. 
If $\mathcal{A}=M\times{\rm gl}(2,\C\!)$ then $E$ is a \emph{hypercomplex vector bundle}.\\ 
\indent 
Alternatively, a quaternionic vector bundle is a vector bundle $E$ with typical fibre $\C^{\!2}\otimes\C^{\!k}$ and structural group 
${\rm SL}(2,\C\!)\cdot{\rm GL}(k,\C\!)$ acting on the typical fibre such that to any $\pm(a,A)\in{\rm SL}(2,\C\!)\cdot{\rm GL}(k,\C\!)$ 
we associate $a\otimes A$\,, $(k\in\mathbb{N})$\,.\\ 
\indent  
A hypercomplex vector bundle can be defined, also, as a vector bundle $E$ with typical fibre $\C^{\!2}\otimes\C^{\!k}$  
and structural group ${\rm GL}(k,\C\!)$ acting on the typical fibre such that to any $A\in{\rm GL}(k,\C\!)$ 
we associate ${\rm Id}_{\C^{\!2}}\otimes A$\,, $(k\in\mathbb{N})$\,.\\ 
\indent 
Let $(E,\mathcal{A},\s)$ be a quaternionic vector bundle. The kernels of the nilpotent elements of $\mathcal{A}\setminus0$ form the total space of a sphere bundle 
$\p:Y\to M$. Moreover, on associating to any nilpotent $A\in\mathcal{A}\setminus0$ the kernel of $\s(A)$ we obtain a bundle morphism 
$\zeta:Y\to{\rm Gr}_k(E)$ which determine $(\mathcal{A},\s)$\,, where $\rank E=2k$\,. With respect to the principal bundle 
with structural group ${\rm SL}(2,\C\!)\cdot{\rm GL}(k,\C\!)$\,, to which $E$ is associated, $Y$ is associated through the morphism of Lie groups 
${\rm SL}(2,\C\!)\cdot{\rm GL}(k,\C\!)\to{\rm PGL}(2,\C\!)$\,, $\pm(a,A)\mapsto\pm a$\,. 

\begin{defn} 
A \emph{$\r$-quaternionic ($\r$-hypercomplex) vector bundle} is a vector bundle $U$, over a manifold $M$, endowed with a pair $(E,\r)$\,, 
where $E$ is a quaternionic (hypercomplex) vector bundle over $M$ and $\r:E\to U$ is a morphism of vector bundles such that $(E_x,\r_x)$ 
is a linear $\r$-quaternionic ($\r$-hypercomplex) structure on $U_x$\,, for any $x\in M$.\\ 
\indent 
An  \emph{almost $\r$-quaternionic ($\r$-hypercomplex) manifold} is a manifold whose tangent bundle is $\r$-quaternionic ($\r$-hypercomplex). 
\end{defn} 

\indent 
Let $(M,E,\r)$ be an almost $\r$-quaternionic manifold, $\rank E=2k$\,. Denote $H={\rm SL}(2,\C\!)\cdot{\rm GL}(k,\C\!)$ and let $(P,M,H)$ 
be the bundle of `quaternionic frames' of $E$\,; that is, $E$ is associated to $P$ through the action of $H$ on $\C^{\!2}\otimes\C^{\!k}$.\\ 
\indent  
To define the notion of integrability, adequate to this context, we need the `principal $\r$-connections' of \cite{Pan-qgfs}\,. 
For $(P,M,H)$ and $\r:E\to M$ this is as follows. Let $TP/H$ be the vector bundle over $M$ whose sections, over some 
open subset $U\subseteq M$, are the $H$-invariant vector fields on $P|_U$ \cite{At-57}\,. Denote by $\widetilde{\dif\!\p}_P$ 
the morphism of vector bundles from $TP/H$ to $TM$ induced by $\dif\!\p_P$\,, where $\p_P:P\to M$ is the projection.\\ 
\indent  
A \emph{principal $\r$-connection} on $P$ is a morphism of vector bundles $c_P:E\to TP/H$ such that $\widetilde{\dif\!\p}_P\circ c_P=\r$\,. 
Then, similarly to the case of (classical) principal connections \cite{At-57}\,, 
the obstruction to the existence of principal $\r$-connections on $P$ is an element of $H^1\bigl(M,{\rm Hom}(E,{\rm Ad}P)\bigr)$\,. 
Furthermore, up to the curvature (for which, one needs a suitable bracket on the sheaf of sections of $E$) the theory of principal connections 
admits a straight generalization to principal $\r$-connections. In particular, any principal $\r$-connection $c_P$ on $P$ determines 
an \emph{associated connection} on $Y$ which is a morphism of vector bundles $c:\p^*E\to TY$ which when composed with the morphism 
from $TY$ to $\p^*(TM)$\,, determined by $\dif\!\p$\,, gives $\p^*\r$\,. Now, let $\Cal$ be the distribution on $Y$ 
given by $\Cal_y=c\bigl(\zeta(y)\bigr)$\,, for any $y\in Y$. 

\begin{defn}[\,\cite{Pan-qgfs}\,]  
The almost $\r$-quaternionic structure $(E,\r)$ is \emph{integrable, with respect to $c$}\,, if $\Cal$ is integrable. 
Then $(M,E,\r,c)$ is a \emph{$\r$-quaternionic manifold}.\\ 
\indent 
Suppose, further, that there exists a surjective submersion $\psi:Y\to Z$ such that ${\rm ker}\dif\!\psi=\Cal$\,, and $\psi$ restricted to each fibre of $\p$ 
is injective. Then $Z$ is the \emph{twistor space} of $(M,E,\r,c)$\,, and, for any $x\in M$, the sphere $\psi(\p^{-1}(x))$\,, embedded into $Z$\,, 
is a \emph{twistor sphere}. 
\end{defn}  

\indent 
Let $(M,E,\r,c)$ be a $\r$-quaternionic manifold such that $E$ is a hypercomplex vector bundle and $c$ is (induced by) the trivial flat 
connection corresponding to the isomorphism $Y=M\times\C\!P^1$. Then $(M,E,\r,c)$ is a \emph{$\r$-hypercomplex manifold}.\\ 
\indent 
Let $(M,E,\r,c)$ be a $\r$-hypercomplex manifold with twistor space $Z$ given by the surjective submersion $\psi:Y\to Z$. 
We shall, further, assume that the projection $Y=M\times\C\!P^1\to\C\!P^1$ factorises into $\psi$ followed by a submersion from $Z$ onto $\C\!P^1$.\\ 
\indent 
A $\r$-quaternionic manifold for which $\r$ is surjective is called a \emph{quaternionic object}. A $\r$-hypercomplex manifold for which $\r$ is surjective 
is called a \emph{hypercomplex object}. 

\begin{defn}  \label{defn:asd_connection} 
Let $(M,E,\r,c)$ be a $\r$-quaternionic manifold; denote by $\p:Y\to M$ the sphere bundle induced by $(E,\r)$ and by $\Cal$ the foliation induced by $c$.\\ 
\indent 
Let $(P,M,G)$ be a principal bundle endowed with a principal $\r$-connection $c_P$. We say that $c_P$ is \emph{anti-self-dual} if (locally) its restriction   
to the image through $\p$ of any leaf of $\Cal$ is (induced by) a flat principal connection. 
\end{defn}  

\indent 
It is useful to have an alternative description of the anti-self-duality condition for connections. For this, with the same notations as in Definition \ref{defn:asd_connection}\,, 
note that, $c$ allows us to take the pull back through $\p$ of any principal $\r$-connection $c_P$ on $(P,M,G)$\,. 
Then $c_P$ is anti-self-dual if and only if $\p^*(c_P)$ restricted to $\Cal$ is flat.\\ 
\indent 
Now, we can give the main result of this section. 

\begin{thm} \label{thm:Ward_transform} 
Let $(M,E,\r,c)$ be a $\r$-quaternionic manifold with twistor space $Z$ given by $\psi:Y\to Z$.\\ 
\indent 
Then for any principal bundle $(\mathcal{P},Z,G)$\,, whose restriction to each twistor sphere is trivial, 
there exists a unique principal bundle $(P,M,G)$\,, endowed with an anti-self-dual principal $\r$-connection, such that 
$\psi^*\mathcal{P}=\p^*P$.\\ 
\indent 
Conversely, if the fibres of $\psi$ are simply connected then any principal bundle over $M$, endowed with an anti-self-dual principal $\r$-connection, 
is obtained, this way, from a unique principal bundle over $Z$, whose restriction to each twistor sphere is trivial. 
\end{thm}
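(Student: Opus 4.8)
The plan is to read both directions off the double fibration $M\overset{\p}{\longleftarrow}Y\overset{\psi}{\longrightarrow}Z$ and to treat the Ward transform as descent across it: for the forward direction I would pull $\mathcal{P}$ back along $\psi$ and push it down along $\p$, and for the converse I would reverse the two roles. Throughout I write $\mathcal{S}\subseteq\p^*E$ for the rank-$k$ subbundle with $\mathcal{S}_y=\zeta(y)$, so that $\Cal=c(\mathcal{S})$ and, by Corollary~\ref{cor:exact_seq_linear_q_classif} applied on each fibre, $\mathcal{S}|_{\p^{-1}(x)}\cong\ol(-1)\otimes F_x$ sits inside $\p^*E|_{\p^{-1}(x)}=\ol\otimes E_x$ as the tautological subbundle.

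For the forward direction I would first set $Q=\psi^*\mathcal{P}$ and observe that, since $\mathcal{P}$ is trivial on each twistor sphere and $\psi$ maps $\p^{-1}(x)$ diffeomorphically onto $\psi(\p^{-1}(x))$, the restriction $Q|_{\p^{-1}(x)}$ is trivial for every $x$. Because the fibres of $\p$ are projective lines, Grauert base change (applied to a vector bundle associated to $Q$ through a faithful representation, then reducing the structure group fibrewise) gives a principal $G$-bundle $P:=\p_*Q$ with $\p^*P\cong Q=\psi^*\mathcal{P}$, and $P$ is unique because the fibres of $\p$ are connected. To produce the connection I would use that $Q=\psi^*\mathcal{P}$ carries the tautological flat partial connection along $\Cal=\ker\dif\!\psi$; splitting the Atiyah sequence of $\p^*P$ over $\Cal$ and using $T(\p^*P)/G=\p^*(TP/G)\times_{\p^*TM}TY$ together with $\dif\!\p\circ c=\p^*\r$, this flat datum is the same as a bundle map $\mu:\mathcal{S}\to\p^*(TP/G)$ with $\widetilde{\dif\!\p}_P\circ\mu=\p^*\r|_{\mathcal{S}}$. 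Restricted to a fibre, $\mu$ is a map $\ol(-1)\otimes F_x\to\ol\otimes(TP/G)_x$, and under $E_x=H^0\bigl(\ol(1)\bigr)\otimes F_x=\C^{\!2}\otimes F_x$ such maps are precisely the elements of ${\rm Hom}\bigl(E_x,(TP/G)_x\bigr)$; assembling them yields a principal $\r$-connection $c_P:E\to TP/G$ with $\widetilde{\dif\!\p}_P\circ c_P=\r$ and $\p^*(c_P)|_{\Cal}=\mu$. Since $\mu$ is flat, the alternative description of anti-self-duality makes $c_P$ anti-self-dual, and $c_P$ is the unique such connection because it is recovered bijectively from the flat $\Cal$-connection on $\psi^*\mathcal{P}$.

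For the converse I would start from $(P,M,G)$ with an anti-self-dual $c_P$ and form $\p^*P$. By the alternative description of anti-self-duality, $\p^*(c_P)$ restricts to a flat partial connection along $\Cal=\ker\dif\!\psi$, so $\p^*P$ is flat along the fibres of $\psi$. These fibres being connected and, by hypothesis, simply connected, the holonomy is trivial and $\p^*P$ descends to a principal bundle $\mathcal{P}$ on $Z$ with $\psi^*\mathcal{P}\cong\p^*P$. On a twistor sphere $\psi(\p^{-1}(x))$ the pullback $\psi^*\mathcal{P}|_{\p^{-1}(x)}=\p^*P|_{\p^{-1}(x)}$ is trivial, and as $\psi|_{\p^{-1}(x)}$ is a diffeomorphism onto it, $\mathcal{P}$ is trivial there; $\mathcal{P}$ is unique because the fibres of $\psi$ are connected. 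Finally I would check that the two constructions are mutually inverse, each descended object being recovered as the relevant direct image, so that applying the forward construction to this $\mathcal{P}$ returns the original $(P,c_P)$.

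The two bundle descents are routine (Grauert over the $\C\!P^1$-fibres of $\p$; flat descent over the simply connected fibres of $\psi$). The main obstacle, and the step where the $\r$-quaternionic structure is indispensable, will be the descent of the connection in the forward direction: I must verify that the flat $\Cal$-connection on $\psi^*\mathcal{P}$ descends to a genuine principal $\r$-connection rather than an arbitrary partial connection, i.e. that the fibrewise identification ${\rm Hom}\bigl(\ol(-1)\otimes F_x,\ol\otimes(TP/G)_x\bigr)={\rm Hom}\bigl(E_x,(TP/G)_x\bigr)$ coming from $H^0\bigl(\ol(1)\bigr)=\C^{\!2}$ intertwines $\widetilde{\dif\!\p}_P\circ\mu=\p^*\r|_{\mathcal{S}}$ with $\widetilde{\dif\!\p}_P\circ c_P=\r$. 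This holds because the tautological inclusion $\mathcal{S}\hookrightarrow\p^*E$ corresponds under that identification to ${\rm Id}_E$, but it is the crux and deserves the careful fibrewise computation.
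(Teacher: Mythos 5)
Your proposal is correct in substance, but it proves the existence half by a genuinely different route from the paper. The paper never descends $\psi^*\mathcal{P}$ along $\p$\,; it works on the twistor side: through every point of $\mathcal{P}$ there is an embedded sphere mapping diffeomorphically onto a twistor sphere, the exact sequence \eqref{e:twistor_spheres_in_P_and_M} together with Theorem \ref{thm:infinitesimal_Ward_transform} shows these spheres have nonnegative normal bundles, and then the existence theorem of \cite{Pan-qgfs} produces a $\r$-quaternionic manifold $(P,E',\r',c')$ whose twistor space is $\mathcal{P}$ itself; the principal bundle structure comes from the induced free $G$-action, and the anti-self-dual $\r$-connection is not built by hand but is simply $\r'$, which a second application of Theorem \ref{thm:infinitesimal_Ward_transform} identifies with a $G$-invariant map $\phi^*E\to TP$ covering $\r$\,. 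This is exactly what the paper means by the Ward transform being a ``manifestation of functoriality'', and it hands the later applications the full $\r$-quaternionic structure of $P$ with twistor space $\mathcal{P}$ (used in Section \ref{section:WhK}\,, e.g.\ for the criterion that $c|_{{\rm ker}\r}$ is an isomorphism if and only if $P$ is hypercomplex with twistor space $\mathcal{P}$), which your construction does not directly produce. Conversely, your route is more self-contained: it replaces the citation of \cite{Pan-qgfs} by base change along $\p$ plus the fibrewise identification ${\rm Hom}\bigl(\ol(-1)\otimes F_x,\ol\otimes V\bigr)\cong{\rm Hom}\bigl(\C^{\!2}\otimes F_x,V\bigr)$ --- which is precisely the linear-algebra core of Theorem \ref{thm:infinitesimal_Ward_transform}\,, so you re-derive that ingredient rather than quote it --- and it makes the forward and converse constructions manifestly inverse to each other, which renders the uniqueness claims transparent. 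The converse direction (flat descent along the simply connected fibres of $\psi$, triviality on twistor spheres pulled back through the diffeomorphisms $\p^{-1}(x)\to\psi(\p^{-1}(x))$) is essentially the same in both proofs; the paper is just terser.

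One technical repair is needed in your descent of $Q=\psi^*\mathcal{P}$ along $\p$\,: you cannot in general invoke a faithful representation of $G$\,, because a complex Lie group need not be linear (a complex torus admits no faithful holomorphic representation, all its holomorphic matrix representations being constant). This is harmless but should be fixed representation-free: every holomorphic map $\C\!P^1\to G$ is constant (pull back the Maurer--Cartan form and use $H^0\bigl(\C\!P^1,\Omega^1\bigr)=0$), so $P_x:=H^0\bigl(\p^{-1}(x),Q\bigr)$ is a $G$-torsor for each $x$\,, and the vanishing $H^1\bigl(\C\!P^1,\ol\otimes\mathfrak{g}\bigr)=0$ shows sections over a fibre extend to sections over $\p^{-1}(U)$ for $U$ a small neighbourhood, so these torsors assemble into a holomorphic principal bundle $P$ with $\p^*P\cong Q$ directly. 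With that adjustment, and with the crux you correctly isolated (that the tautological inclusion $\mathcal{S}\hookrightarrow\p^*E$ corresponds to ${\rm Id}_E$\,, which holds by the normalization of Corollary \ref{cor:exact_seq_linear_q_classif}), your argument goes through.
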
 
\begin{proof} 
Let $(\mathcal{P},M,G)$ be a principal bundle over $Z$ whose restriction to each twistor sphere is trivial; denote by $\Phi$ its projection. 
Then through each point of $\mathcal{P}$ passes an embedded sphere which is diffeomorphically projected by $\Phi$ onto a twistor sphere. Moreover, 
if $t\subseteq\mathcal{P}$ is such a sphere then $\dif\!\Phi$ induces an exact sequence 
\begin{equation} \label{e:twistor_spheres_in_P_and_M} 
0\longrightarrow{\rm ker}(\dif\!\Phi|_t)\longrightarrow\mathcal{N}t\longrightarrow Nt\longrightarrow0\;, 
\end{equation} 
where $\mathcal{N}t$ and $Nt$ are the normal bundles of $t$ and $\Phi(t)$ into $\mathcal{P}$ and $Z$, respectively. 
From Theorem \ref{thm:infinitesimal_Ward_transform} we obtain that $\mathcal{N}t$ is nonnegative. Hence, by \cite{Pan-qgfs}\,, 
there exists a $\r$-quaternionic manifold $(P,E',\r',c')$ whose twistor space is $\mathcal{P}$. Moreover, $G$ acts freely on $P$ and 
$\Phi$ corresponds to a surjective submersion $\phi:P\to M$\,; consequently, $(P,M,G)$ is a principal bundle. 
Also, the sphere bundle of $(E',\r')$ can be canonically identified with both $\psi^*\mathcal{P}$ and $\p^*P$.\\ 
\indent 
Now, by applying, again, Theorem \ref{thm:infinitesimal_Ward_transform}\,, we deduce that $E'=\phi^*E$ and $\widetilde{\dif\!\phi}\circ\r'=\phi^*\r$\,, 
where $\widetilde{\dif\!\phi}:TP\to\phi^*(TM)$ is the morphism of vector bundles induced by $\dif\!\phi$\,. 
Thus, as $\r'$ is also $G$-invariant, it defines a principal $\r$-connection on $(P,M,G)$\,, 
whose pull back by $\p$ (with respect to $c$) must be flat along the fibres of $\psi$\,, because $\psi^*\mathcal{P}=\p^*P$.\\ 
\indent 
Conversely, suppose that the fibres of $\psi$ are simply-connected and let $(P,M,G)$ be a principal bundle endowed with an anti-self-dual connection $c_P$. 
Then there exists a principal bundle $(\mathcal{P},Z,G)$ such that $\psi^*\mathcal{P}=\p^*P$. As the restriction of $\p^*P$ to each fibre of $\p$ is trivial, 
the proof is complete. 
\end{proof}

\section{The Ward transform and hyper-K\"ahler manifolds} \label{section:WhK} 

\indent 
Let $(M,E,\r,c)$ be a hypercomplex object with twistor space $Z$ given by the surjective submersion $\psi:Y=M\times\C\!P^1\to Z$. 
Recall that the normal bundles of the twistor spheres are positive, and we assume that the projection $Y\to\C\!P^1$ factorises as $\chi\circ\psi$\,, 
where $\chi:Z\to\C\!P^1$ is a surjective submersion; in particular, the restriction of $\chi$ to each twistor sphere is a diffeomorphism. 
Therefore, for any $z\in\C\!P^1$, the restriction of $\psi$ to $M\times\{z\}$ defines a surjective submersion $\psi_z:M\to\chi^{-1}(z)$\,.\\ 
\indent 
Let $(\mathcal{P},Z,G)$ be a principal bundle whose restriction to each twistor sphere is trivial, and let $(P,M,G)$ be the principal bundle 
endowed with a principal $\r$-connection $c$ obtained through the Ward transform. Recall that $\r:E=\C^{\!2}\otimes F\to TM$ is the 
surjective morphism of vector bundles defining the almost $\r$-hypercomplex structure of $M$, where $F$ is some vector bundle over $M$. 
Also, $c:E\to TP/G$ is a morphism of vector bundles such that $\widetilde{\dif\!\phi}\circ c=\r$\,, 
where $\phi:P\to M$ is the projection and $\widetilde{\dif\!\phi}:TP/G\to TM$ 
is the induced morphism of vector bundles. In particular, the restriction of $c$ to ${\rm ker}\r$ takes values into ${\rm Ad}P\,(\subseteq TP/G)$\,.\\ 
\indent 
From Theorem \ref{thm:infinitesimal_Ward_transform}\,, we obtain that 
\emph{$c|_{{\rm ker}\r}:{\rm ker}\r\to{\rm Ad}P$ is an isomorphism if and only if $P$ is hypercomplex and its twistor space is $\mathcal{P}$}.\\ 
\indent 
From now on, in this section, we chall assume that $c|_{{\rm ker}\r}$ is an isomorphism, and we shall seek 
sufficient conditions under which $P$ is endowed with a compatible hyper-K\"ahler structure. 
Let $\mathcal{E}\subseteq T\mathcal{P}/G$ be the preimage of ${\rm ker}\chi$ through $\widetilde{\dif\!\Phi}$\,, 
where $\Phi:\mathcal{P}\to Z$ is the projection and $\widetilde{\dif\!\Phi}:T\mathcal{P}/G\to TZ$ is induced by $\dif\!\Phi$\,. 
Then the restriction of $\Fa=\chi^*\bigl(\ol(-1)\bigr)\otimes\mathcal{E}$ to each twistor sphere is trivial;  
moreover, the bundle over $M$ induced by the Ward transform is $F$. (Note that, the condition $\Fa$ restricted to each twistor sphere be trivial 
is equivalent to $c|_{{\rm ker}\r}$ be an isomorphism.) 

\begin{prop} \label{prop:hK} 
Any compatible hyper-K\"ahler metric on $P$, invariant under $G$, corresponds to a linear symplectic structure on $\Fa$.\\ 
\indent  
Moreover, if $\dim P\geq8$ then any linear conformal symplectic structure on $\Fa$ induces, locally, a compatible hyper-K\"ahler metric on $P$. 
\end{prop}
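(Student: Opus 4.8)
The plan is to deduce both parts from the standard twistorial description of hyper-K\"ahler metrics, the guiding observation being that the twist by $\chi^*\bigl(\ol(2)\bigr)$ inherent in that description is exactly undone by the factor $\chi^*\bigl(\ol(-1)\bigr)$ in the definition of $\Fa$\,. Since $P$ is hypercomplex with twistor space $\mathcal{P}$\,, I would first recall that a compatible hyper-K\"ahler metric on $P$ is the same as a holomorphic relative two-form, along the fibres of the twistor fibration $\chi\circ\Phi:\mathcal{P}\to\C\!P^1$\,, which is nondegenerate and closed on each fibre and takes values in the pull back of $\ol(2)$ (the twistor-line condition being already part of the hypotheses). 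Invariance under $G$ means that this twisted relative form is determined by its values on the $G$-invariant vertical directions, that is, by a section of $\chi^*\bigl(\ol(2)\bigr)\otimes\Lambda^2\mathcal{E}^*$ over $Z$\,. The canonical identification $\chi^*\bigl(\ol(2)\bigr)\otimes\Lambda^2\mathcal{E}^*=\Lambda^2\Fa^*$ then converts it into a fibrewise nondegenerate section of $\Lambda^2\Fa^*$\,, i.e.\ a linear symplectic structure on $\Fa$\,; this proves the first assertion.

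For the converse I would run this correspondence backwards, starting from a linear conformal symplectic structure on $\Fa$\,: undoing the identification, it yields a conformal class of $G$-invariant, fibrewise nondegenerate, $\ol(2)$-valued relative two-forms along the fibres of $\chi\circ\Phi$\,. By the characterisation just used, it suffices to produce, locally, a representative $\omega$ of this class which is, in addition, closed along the fibres: such a form is precisely the twistor datum of a compatible hyper-K\"ahler metric on $P$\,.

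The main obstacle is exactly this closedness. Writing $\dif$ for the exterior derivative along the fibres, the conformal symplectic structure makes a chosen representative $\omega$ locally conformally symplectic along the fibres, so that $\dif\!\omega=\theta\wedge\omega$ for a relative one-form $\theta$ (its Lee form); applying $\dif$ to this identity, and using $\theta\wedge\theta=0$\,, gives $\dif\!\theta\wedge\omega=0$\,. By the Lefschetz property of the nondegenerate $\omega$\,, wedging with $\omega$ is injective from relative two-forms to relative four-forms as soon as the fibres of $\chi\circ\Phi$ are of complex dimension at least $6$\,; as the relevant dimension is forced to be a multiple of $4$ (this being what allows a symplectic structure on $\Fa$ in the first place), that injectivity holds precisely under the hypothesis $\dim P\geq8$\,, whence $\dif\!\theta=0$\,. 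Locally we may then write $\theta=\dif\!\log f$ for a nowhere-zero holomorphic $f$ and replace $\omega$ by $f^{-1}\omega$\,, which is now closed along the fibres while remaining fibrewise nondegenerate and $\ol(2)$-valued. This is the twistor datum of the desired compatible hyper-K\"ahler metric on $P$\,, completing the proof.
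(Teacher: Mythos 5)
Your first assertion is handled correctly, though by a different route from the paper: you invoke the holomorphic (Hitchin--Karlhede--Lindstr\"om--Ro\v{c}ek-type) twistor characterisation of hyper-K\"ahler metrics as fibrewise closed, nondegenerate, $\ol(2)$-twisted relative two-forms on $\mathcal{P}\to\C\!P^1$, and then descend by $G$-invariance via the (correct) identification $\Lambda^2\Fa^*=\chi^*\bigl(\ol(2)\bigr)\otimes\Lambda^2\mathcal{E}^*$. The paper instead stays inside its own machinery: it describes the Obata connection $\nabla$ of $P$ as the tensor product of the trivial connection on $P\times\C^{\!2}$ with the anti-self-dual connection $\nabla^{F_P}$ given by the Ward transform, identifies compatible hyper-K\"ahler metrics with $\nabla^{F_P}$-parallel linear symplectic structures on $F_P$, and passes to $\Fa$ by the Ward transform at the level of sections. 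Both are legitimate; yours outsources the work to the twistor correspondence for hyper-K\"ahler metrics, which the paper never needs to invoke in this form.

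The second assertion is where your argument has a genuine gap: you assert that a chosen representative $\omega$ of the conformal class of relative two-forms satisfies $\dif\!\omega=\theta\wedge\omega$ along the fibres (is locally conformally symplectic). Nothing in the hypotheses gives this. A linear conformal symplectic structure on $\Fa$ is purely pointwise, linear-algebraic data; for a general fibrewise nondegenerate relative two-form, the fibrewise exterior derivative is an arbitrary relative three-form, and the condition that it lie in $\theta\wedge\omega$ is a nontrivial first-order differential constraint, not a consequence of nondegeneracy. Worse, granted the rest of your argument, this claim is locally \emph{equivalent} to the conclusion: if some rescaling $f^{-1}\omega$ is fibrewise closed, then $\dif\!\omega=\dif\!\log f\wedge\omega$; so you have in effect assumed what is to be proved. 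The missing differential-geometric input is exactly what the paper supplies: the Obata connection $\nabla$ (whose existence and tensor-product description come from the Ward transform) preserves the conformal structure induced on $P$ by the conformal symplectic structure, so one has a hyper-Hermitian Weyl structure, and for such structures in dimension $\geq8$ the Lee form is automatically closed, yielding locally a $\nabla$-parallel representative metric, which is then hyper-K\"ahler since $\nabla$ is torsion free. Your Lefschetz computation is the shadow of this fact --- it converts the locally conformally symplectic equation into closedness of the Lee form --- but it cannot produce that equation in the first place. (A minor further point: the ``multiple of $4$'' remark is neither needed nor among this proposition's hypotheses; evenness of $\dim P=\rank\Fa$ is automatic from $TP=\C^{\!2}\otimes F_P$, and evenness with $\dim P\geq8$ is all your Lefschetz step requires.)
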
  
\begin{proof} 
Let $\nabla$ be the Obata connection on $P$. This is characterized by the fact that it is torsion free and compatible with the 
almost hypercomplex structure of $P$. It can be, also, described as follows. Let $\chi_P=\chi\circ\Phi$\,, where $\Phi:\mathcal{P}\to Z$ 
is the projection, and let $E_P=\phi^*E=\C^{\!2}\otimes F_P$\,, where $F_P=\phi^*F$. Then $F_P$ corresponds, through the Ward transform,  
to $\Fa_P=\chi_P^*\bigl(\ol(-1)\bigr)\otimes({\rm ker}\dif\!\chi_P)$\,; denote by $\nabla^{F_P}$ the corresponding anti-self-dual connection on $F_P$\,. 
Then $\nabla$ is the tensor product of the trivial connection on $P\times\C^{\!2}$ and $\nabla^{F_P}$.\\ 
\indent 
Now, any compatible hyper-K\"ahler metric on $P$ corresponds to a linear symplectic structure on $F_P$ 
which is covariantly constant with respect to $\nabla^{F_P}$. Together with the Ward transform, this shows that any 
compatible hyper-K\"ahler metric on $P$ corresponds to a linear symplectic structure on $\Fa_P$\,. This quickly implies the first statement.\\ 
\indent 
For the second statement, note that, any linear conformal symplectic structure on $\Fa_P$ induces a conformal structure on $P$ which 
is Hermitian; that is, for any $z\in\C\!P^1$, the fibres of $\psi_z$ are isotropic with respect to this conformal structure. 
As $\nabla$ preserves this conformal structure and $\dim P\geq8$\,, this completes the proof. 
\end{proof}  

\indent 
We know that the $\r$-quaternionic vector spaces whose vector bundles are positive line bundles are given by the 
irreducible representations $U_k$ of ${\rm SL}(2,\C\!)$\,, with $\dim U_k=k+1$\,, $k\in\mathbb{N}\setminus\{0\}$\,. 
The linear $\r$-quaternionic structure of $U_k$ is given by $E=U_1\otimes U_{k-1}=U_{k-2}\oplus U_k$ and the projection 
$\r:E\to U_k$\,, $(k\in\mathbb{N}\setminus\{0\})$\,, where $U_0$ is the trivial one-dimensional representation. 
Also, the vector bundle of $U_k$ is $\ol(k)$\,, and, if $k$ is even, there exists a ${\rm SL}(2,\C\!)$-invariant Euclidean structure 
$h_k$ on $U_k$\,, unique up to homotheties.    

\begin{thm} \label{thm:hK_Z_surface}
If $\dim Z=2$ and $4$ divides $\dim P\geq8$ then $P$ is endowed with a compatible hyper-K\"ahler metric. 
\end{thm}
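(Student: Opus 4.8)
The plan is to reduce the statement to Proposition \ref{prop:hK} by exhibiting a linear conformal symplectic structure on $\Fa$, and to produce the latter from the representation theory of ${\rm SL}(2,\C\!)$ once the two numerical hypotheses are translated into a single parity condition. First I would record what $\dim Z=2$ imposes. The normal bundles of the twistor spheres are positive and, here, line bundles over $\C\!P^1$, so each is isomorphic to $\ol(k)$ for some $k\geq1$; thus the $\r$-quaternionic vector space $T_xM$ has vector bundle $\ol(k)$ and is modelled on $U_k$, with $\dim M=\dim H^0\bigl(\ol(k)\bigr)=k+1$ and $E=U_1\otimes U_{k-1}=U_{k-2}\oplus U_k$, so that the bundle $F$ with $E=\C^{\!2}\otimes F$ is modelled on $U_{k-1}$. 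Tracing $\Fa=\chi^*\bigl(\ol(-1)\bigr)\otimes\mathcal{E}$ through the exact sequence $0\to{\rm Ad}\mathcal{P}\to\mathcal{E}\to{\rm ker}\dif\!\chi\to0$ (in which ${\rm ker}\dif\!\chi$ is a line bundle, as $\dim Z=2$) and using that $\Fa$ restricts trivially to each twistor sphere, one finds $\rank\Fa=\tfrac12\dim P=k$. Hence $4\mid\dim P\geq8$ is precisely the condition that $k$ be even and $k\geq4$.

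The core point is then the parity. For $k$ even, $k-1$ is odd, so the irreducible representation $U_{k-1}={\rm Sym}^{k-1}\C^{\!2}$ carries an ${\rm SL}(2,\C\!)$-invariant symplectic form, namely ${\rm Sym}^{k-1}$ of the canonical symplectic form on $U_1$; this is the odd-degree counterpart of the invariant Euclidean structures $h_k$ living on the even powers $U_k$, and it is antisymmetric exactly because $k-1$ is odd. Since $\dim Z=2$ pins down the fibrewise ${\rm SL}(2,\C\!)$-module structure of $F$ as $U_{k-1}$, the line spanned by this invariant form is canonically defined on each fibre and thus equips $F$ with a linear conformal symplectic structure. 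Being ${\rm SL}(2,\C\!)$-invariant, it is automatically parallel with respect to the anti-self-dual connection $\nabla^F$ on $F$ furnished by the Ward transform, since $\nabla^F$ lies in the reduced structural group.

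Finally, by the functoriality of the correspondence (Theorems \ref{thm:infinitesimal_Ward_transform} and \ref{thm:Ward_transform}) this $\nabla^F$-parallel conformal symplectic structure on $F$ transports to a linear conformal symplectic structure on $\Fa$. Proposition \ref{prop:hK}, together with the hypothesis $\dim P\geq8$, then yields a compatible hyper-K\"ahler metric on $P$; the role of $\dim P\geq8$ is exactly that, the invariant form being only defined up to scale, what one obtains is a conformal (rather than a strict) symplectic structure, and it is in that regime that Proposition \ref{prop:hK} produces the metric.

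I expect the main obstacle to be the second step: showing that the invariant symplectic form on the model $U_{k-1}$ genuinely descends to a globally defined, $\nabla^F$-parallel conformal symplectic structure on $F$. For a general $M$ the structural group of $F$ need not a priori reduce to the (conformal) symplectic group, so the delicate part is to argue that $\dim Z=2$ forces the fibres of $F$ to be ${\rm SL}(2,\C\!)$-modules isomorphic to $U_{k-1}$ in a way compatible with the $\r$-hypercomplex data, whence the canonical invariant line of forms is parallel under the Ward-transform connection. The parity $k$ even is what makes the relevant invariant tensor antisymmetric rather than symmetric, and this is precisely where the hypothesis $4\mid\dim P$ enters.
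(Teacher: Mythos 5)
Your skeleton is the same as the paper's --- set $k=\tfrac12\dim P$, so the hypotheses become ``$k$ even, $k\geq4$'', identify the fibres of $\Fa$ with $U_{k-1}$, use that $k-1$ is odd to get an invariant line of symplectic forms, and conclude via Proposition \ref{prop:hK} --- but the step you yourself flag as ``the main obstacle'' is precisely the content of the paper's proof, and your proposal does not close it. The fibrewise statement is not the problem: since each $T_xM$ is a $\r$-quaternionic vector space whose vector bundle is $\ol(k)$, each fibre $F_x$ is indeed isomorphic to $U_{k-1}$ canonically up to the action of ${\rm GL}(2,\C\!)$, so each $F_x$ (over $M$) carries a canonical line of symplectic forms. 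What is missing is the coherence of these fibrewise structures over $Z$: a point $z\in Z$ lies on a $k$-parameter family of twistor spheres, each sphere $t\ni z$ corresponds to a point $x\in M$ together with an identification $\Fa_z\cong F_x$, and a priori the canonical structures on the various $F_x$ induce \emph{different} lines of forms on $\Fa_z$. Only if they agree do you get a linear conformal symplectic structure on $\Fa$ over $Z$ (equivalently, a $\nabla^F$-parallel one on $F$ over $M$), which is what Proposition \ref{prop:hK} requires. Your justification --- ``it is automatically parallel \ldots since $\nabla^F$ lies in the reduced structural group'' --- is circular: whether the Ward-transform connection is compatible with the fibrewise-defined reduction is exactly the coherence in question, and nothing forces an anti-self-dual connection to preserve a reduction defined fibre by fibre.

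The paper closes this gap with a geometric argument you would need to supply. Each twistor sphere $t\ni z$ induces, via the surjection $\mu_t:\Fa|_t\to\chi_P^*\bigl(\ol(k-1)\bigr)$ coming from $\widetilde{\dif\!\Phi}$, a linear $\r$-quaternionic structure on $\Fa_z$ whose vector bundle is $\ol(k-1)$, hence a Veronese embedding $\zeta_t:t\to P\Fa_z^*$ defined up to a nonzero factor. Because the twistor spheres in $Z$ have normal bundle $\ol(k)$, two generic spheres $t,t'$ through $z$ meet in $k$ `linearly independent' points; $\zeta_t$ and $\zeta_{t'}$ agree at those points, and a rational curve of degree $k-1$, compatibly parametrized via $\chi_P$, is determined by its values at $k$ such points, so $\zeta_t=\zeta_{t'}$. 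This is what makes the $U_{k-1}$-structure on $\Fa_z$ independent of the choice of twistor sphere and yields the reduction of the frame bundle of $\Fa$ to ${\rm GL}(2,\C\!)$ acting through $\odot^{k-1}U_1$; after that, your parity argument ($k$ even, so $\Lambda^2U_1$ induces a conformal symplectic structure on $\Fa$) and the appeal to Proposition \ref{prop:hK} with $\dim P\geq8$ go through exactly as you propose.
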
 
\begin{proof} 
Let $k\geq4$ be such that $\dim P=2k$\,. Then the normal bundle of each twistor sphere in $Z$ is $\ol(k)$\,, 
and we claim that the frame bundle of $\Fa$ admits a reduction to ${\rm GL}(2,\C\!)\,\bigl(={\rm GL}(U_1)\bigr)$ 
through its representation on $U_{k-1}\,(=\odot^{k-1}U_1)$\,, restricting to the corresponding irreducible representation of ${\rm SL}(2,\C\!)$ .\\ 
\indent 
Indeed, let $z\in Z$ and let $t\subseteq Z$ be a twistor sphere such that $z\in t$\,. Then $\Fa|_t$ is trivial 
and $\widetilde{\dif\!\Phi}$ induces a surjective morphism of vectors bundles $\mu_t:\mathcal{F}|_t\to\chi_P^*\bigl(\ol(k-1)\bigr)$  
such that $H^0(\mu_t)$ is an isomorphism. Consequently, the fibre $\Fa_z$ is endowed with a linear $\r$-quaternionic structure, 
whose vector bundle is $\ol(k-1)$ and is therefore determined, up to a nonzero factor, by a Veronese embedding $\zeta_t:t\to P\mathcal{F}_z^*$\,. 
Now, for the generic twistor sphere $t'\subseteq Z$ with $z\in t'$ we have that $t$ and $t'$ intersect in $k$ `linearly independent' points 
(because the twistor spheres in $Z$ have normal bundle $\ol(k)$\,). But $\zeta_t$ and $\zeta_{t'}$ are equal at each point of $t\cap t'$\,. 
Hence, up to the diffeomorphism $t=t'$, induced by $\chi_P$\,, we have $\zeta_t=\zeta_{t'}$, and the claim is proved.\\ 
\indent 
As $k$ is even, any nonzero element of $\Lambda^2U_1$ determines a linear conformal symplectic structure on $\Fa$ and, 
by Proposition \ref{prop:hK}\,, the proof is complete. 
\end{proof} 

\indent 
Next, in this section, we concentrate on proving that $\phi$ of Theorem \ref{thm:hK_Z_surface} is a harmonic morphism. 
Firstly, note that, if $Z$ is a surface and $\dim M=k+1$ then $\phi$ is horizontally conformal with respect 
to the conformal structure induced by $h_k$ on $M$. Indeed, up to nonzero factors, at each point of $M$, the differential of $\phi$ 
is modelled by the projection $U_1\otimes U_{k-1}=U_{k-2}\oplus U_k\to U_k$\,. Now, we need the following. 

\subsection*{The Obata $\r$-connection} 

\indent 
We shall use the same notations as in the beginning of this section with the further assumption that the normal bundle in $Z$ 
of one (and, hence, any) twistor sphere is $l\ol(k)$ for some positive integers $k$ and $l$\,. Therefore the restriction 
of ${\rm ker}\dif\!\chi$ to each twistor sphere is isomorphic to $l\ol(k)$\,. Thus, on denoting 
$\Fa_M=\chi^*\bigl(\ol(-k)\bigr)\otimes({\rm ker}\dif\!\chi)$\,, then the restriction of $\Fa_M$ to each twistor sphere is trivial. 
Let $F_M$ be the vector bundle over $M$ endowed with an anti-self-dual $\r$-connection $\nabla^{F_M}$ determined by $\Fa_M$\,, 
through the Ward transform; obviously, $\rank F_M=l$\,.\\ 
\indent 
Consequently, $TM=(M\times U_k)\otimes F_M$ and we call the the tensor product $\nabla^M$ 
of (the $\r$-connection induced by) the trivial connection on $M\times U_k$ and $\nabla^{F_M}$ 
\emph{the Obata $\r$-connection of $M$}.\\ 
\indent 
To characterise the Obata $\r$-connection, note that, up to integrability, the structure of $M$ is given by a reduction of its frame bundle  
to ${\rm GL}(l,\C\!)$ through its representation on $U_k\otimes\C^{\!l}$, given by $a\mapsto{\rm Id}_{U_k}\otimes a$\,, for any $a\in{\rm GL}(l,\C\!)$\,. 

\begin{prop} \label{prop:charact_Obata} 
The Obata $\r$-connection $\nabla^M$ is characterized by the following:\\ 
\indent 
{\rm (1)} $\nabla^M$ is compatible with the almost ${\rm GL}(l,\C\!)$-structure of $M$\,; in particular, $\nabla^M$ preserves ${\rm ker}\dif\!\psi_z$\,, 
for any $z\in\C\!P^1$.\\ 
\indent 
{\rm (2)} The $\r$-connection induced by $\nabla^M$ on the normal bundle of each fibre of $\psi_z$ is given by the partial Bott connection defined by 
${\rm ker}\dif\!\psi_z$\,, for any $z\in\C\!P^1$.  
\end{prop}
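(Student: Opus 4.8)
The plan is to check that the connection $\nabla^M$ produced by the construction satisfies {\rm (1)} and {\rm (2)}, and then to show that {\rm (1)} and {\rm (2)} determine it uniquely. I would begin by locating the distributions ${\rm ker}\dif\!\psi_z$ inside the decomposition $TM=(M\times U_k)\otimes F_M$\,. Since we are in the hypercomplex case, with $c$ induced by the trivial flat connection, the leaf of $\Cal$ through $(x,z)$ lies in $M\times\{z\}$ and its $\p$-image is an integral manifold of $\r(E_z)$\,; as $\dim\chi^{-1}(z)=\dim Z-1=l$ while $\r|_{E_z}$ is injective with $kl$-dimensional image and $kl=\dim M-l$\,, this forces ${\rm ker}\dif\!\psi_z=\r(E_z)$\,. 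Under $E=U_1\otimes U_{k-1}\otimes F_M$ and $\r=\r_{U_k}\otimes{\rm Id}_{F_M}$ this reads ${\rm ker}\dif\!\psi_z=(U_k)'_z\otimes F_M$\,, where $(U_k)'_z=\r_{U_k}\bigl((U_1)_z\otimes U_{k-1}\bigr)\subseteq U_k$ is the $k$-dimensional subspace attached to $z$\,, depending only on $z$\,; dually, $N_z:=TM/{\rm ker}\dif\!\psi_z=L_z\otimes F_M$ with $L_z=U_k/(U_k)'_z$ a line.

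Condition {\rm (1)} is then immediate from the construction: $\nabla^M$ is the tensor product of the trivial connection on $M\times U_k$ and $\nabla^{F_M}$\,, so it respects the splitting $TM=U_k\otimes F_M$ and differentiates only the $F_M$-factor, which is exactly compatibility with the ${\rm GL}(l,\C\!)$-structure. The ``in particular'' follows, because the trivial connection on $M\times U_k$ preserves the constant subbundle $M\times(U_k)'_z$\,, whence $\nabla^M$ preserves $(U_k)'_z\otimes F_M={\rm ker}\dif\!\psi_z$\,.

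For {\rm (2)} I would argue through anti-self-duality. By Definition \ref{defn:asd_connection}\,, $\nabla^{F_M}$ is flat along the images through $\p$ of the leaves of $\Cal$\,, which are precisely the fibres of $\psi_z$\,; hence the connection induced by $\nabla^M$ on $N_z=L_z\otimes F_M$ is flat along ${\rm ker}\dif\!\psi_z$\,, and so is the partial Bott connection, by the Jacobi identity. To identify the two, fix a fibre $\Sigma$ of $\psi_z$\,, which $\psi_z$ collapses to a single point $w\in\chi^{-1}(z)$\,. Then $N_z|_{\Sigma}\cong\psi_z^*\bigl(({\rm ker}\dif\!\chi)|_{\chi^{-1}(z)}\bigr)|_{\Sigma}$ is canonically the constant bundle $({\rm ker}\dif\!\chi)_w$\,, and its Bott-parallel sections over $\Sigma$ are exactly these constants. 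The point is that, by the very construction of the Ward transform (Theorems \ref{thm:infinitesimal_Ward_transform} and \ref{thm:Ward_transform}\,, via $\Fa_M=\chi^*(\ol(-k))\otimes({\rm ker}\dif\!\chi)$\,), the identification of the normal-bundle data along $\Sigma$ with the fibre datum over $w$ is realized by $\nabla^{F_M}$-parallel transport; thus the $\nabla^M$-parallel sections of $N_z|_{\Sigma}$ are the same constants, and the two flat partial connections coincide. This identification is the step I expect to demand the most care.

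Finally, for uniqueness, suppose $\nabla$ and $\nabla'$ both satisfy {\rm (1)}\,. Compatibility with the ${\rm GL}(l,\C\!)$-structure forces their difference to act through the $F_M$-factor, so it has the form ${\rm Id}_{U_k}\otimes a$ for some $a\in\Gamma\bigl(E^*\otimes{\rm End}\,F_M\bigr)$\,. For a fixed $z$\,, condition {\rm (2)} applied to both says their induced connections on $N_z$ agree along ${\rm ker}\dif\!\psi_z$\,; but on $N_z=L_z\otimes F_M$ the difference acts as ${\rm Id}_{L_z}\otimes a(e)$ for $e\in E_z$\,, and since $L_z\neq0$ this gives $a|_{E_z}=0$\,. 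Letting $z$ range over $\C\!P^1$ and using that the subspaces $E_z$ span $E$\,, we obtain $a=0$\,, that is $\nabla=\nabla'$\,, which completes the proof.
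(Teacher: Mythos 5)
Your proof is correct and takes essentially the same route as the paper's: both reduce condition (1) to the statement that the connection is the tensor product of the trivial flat connection on $M\times U_k$ with a $\r$-connection on $F_M$\,, and both identify condition (2) with the Ward-transform characterization of $\nabla^{F_M}$ (its pull-back to $Y$ is the trivial flat connection along the fibres of $\psi$\,, whose parallel sections are exactly the $\psi_z$-projectable, i.e.\ Bott-parallel, ones). The only difference is organizational: you split the characterization into existence plus a difference-tensor uniqueness argument (using that the subspaces $E_z$\,, $z\in\C\!P^1$, span $E$\,), a point the paper leaves implicit in its chain of equivalences.
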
  
\begin{proof} 
Condition (1) applied to any connection $\nabla$ is equivalent to the fact that $\nabla$ is the tensor product of the trivial flat connection on $M\times U_k$ 
and a $\r$-connection $\nabla_1$ on $F_M$\,. Assuming that this holds, $\nabla_1$ is anti-self-dual if and only if, for any $z\in\C\!P^1$, the restriction 
of $\nabla_1$ to the fibres of $\psi_z$ is flat.\\ 
\indent 
Let $z\in\C\!P^1$. It is easy to see that the restriction of $F_M$ to any fibre of $\psi_z$ is isomorphic to the normal bundle of that fibre. 
Therefore, up to this isomorphisms, condition (2) applied to $\nabla_1$ and $z$ says that the restriction of $\nabla_1$ to each fibre of 
$\psi_z$ is the flat connection whose covariantly constant sections are characterized by the fact that are projectable with respect to $\psi_z$\,.\\ 
\indent 
Now, recall that $F_M$ is characterised by $\psi^*\bigl(\Fa_M\bigr)=\p^*\bigl(F_M)$\,, and $\nabla^{F_M}$ is characterized by the fact that its pull back to 
$Y=M\times\C\!P^1$ is the trivial flat connection when restricted to the fibres of $\psi$\,. As, essentially, $\psi_z$ is the restriction of $\psi$ to $M\times\{z\}$   
we deduce that condition (2) characterizes $\nabla^M$ among the $\r$-connections satisfying (1)\,. 
\end{proof} 

\indent 
Suppose, now, that $l=1$ and $k$ is even, and let $[h]$ be the conformal structure on $M$ induced by $h_k$\,. The almost $\r$-hypercomplex structure 
of $M$ is given by $(E,\r)$\,, where $E=\bigl(M\times(U_1\otimes U_{k-1})\bigr)\otimes F_M$ and $\r:E\to TM$ is induced by the projection 
$U_1\otimes U_{k-1}=U_{k-2}\oplus U_k\to U_k$\,. Therefore $\r$ admits a canonical section through which $TM$ becomes a subbundle of $E$\,. 
Thus, the Obata connection $\nabla^M$ induces, by restriction, a (classical) connection on $M$ which we shall denote by $D$.\\ 
\indent  
Recall \cite{LouPan} that a function $f$, locally defined on $M$, is called \emph{harmonic (with respect to $D$ and $[h]$)}  
if $\trace_h(D\!\dif\!f)=0$\,. 

\begin{prop} \label{prop:har_functions} 
For any $z$ and for any function $w$ locally defined on $\chi^{-1}(z)$ the function $w\circ\psi_z$ is harmonic. 
\end{prop}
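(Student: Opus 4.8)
The plan is to write $f=w\circ\psi_z$ and to set $V=\ker\dif\!\psi_z\subseteq TM$, whose fibre is $\r(E_z)$ (the $z$-distribution whose leaves are the fibres of $\psi_z$), and then to reduce the vanishing of $\trace_h(D\dif\!f)$ to two facts: that the Hessian $D\dif\!f$ is a section of $V^0\otimes V^0$, and that the line $V^0$ is null for $[h]$. Since $f=\psi_z^*w$ is constant along the fibres of $\psi_z$, its differential $\dif\!f$ lies in the annihilator $V^0\subseteq T^*M$; as $V$ has corank one, $V^0$ is a line bundle, so I may write $\dif\!f=\l\,\e$ for a local generator $\e$ of $V^0$.

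First I would show $D\dif\!f\in\Gamma(V^0\otimes V^0)$, and here \emph{both} clauses of Proposition \ref{prop:charact_Obata} enter. For the second argument it suffices that, by clause {\rm (1)}, $D$ preserves $V$ and hence $V^0$, so $D_X\dif\!f\in\Gamma(V^0)$ for every $X$; that is, $(D\dif\!f)(X,Y)=0$ whenever $Y\in V$. For the first argument I would invoke clause {\rm (2)}: the connection induced by $D$ on $V^0=(TM/V)^*$ in the fibre directions of $\psi_z$ is dual to the partial Bott connection, and $\dif\!f=\psi_z^*(\dif\!w)$ is projectable, hence flat for it; thus $D_X\dif\!f=0$ for $X\in V$, i.e. $(D\dif\!f)(X,Y)=0$ whenever $X\in V$. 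Combining the two, $D\dif\!f=\mu\,\e\otimes\e$ for some local function $\mu$, whence $\trace_h(D\dif\!f)=\mu\,h^{-1}(\e,\e)$, and it remains only to prove $h^{-1}(\e,\e)=0$.

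It then suffices to show that the corank-one subspace $V=\r(E_z)$ is degenerate for $h_k$, equivalently that $\e$ is null. Under $TM=(M\times U_k)\otimes F_M$, the fibre of $V$ is the image in $U_k$ of $E_z=\ell_z\otimes U_{k-1}$ (tensored with $F_M$), where $\ell_z\subseteq U_1$ is the line determined by $z$; writing $U_k=\odot^kU_1$ and choosing $0\neq v\in\ell_z$, this is the hyperplane $v\odot\odot^{k-1}U_1$ of degree-$k$ forms divisible by $v$, i.e. the sum of all weight spaces except the extreme one $\langle v^{\odot k}\rangle$. Using that $h_k$ restricts to the symmetric powers by $\langle x^{\odot k},y^{\odot k}\rangle=\mathrm{const}\cdot\omega(x,y)^k$, with $\omega$ the symplectic form on $U_1$, an apolarity (equivalently, weight) computation shows that $v^{\odot k}$ is isotropic and orthogonal to all of $V$; as $V^\perp$ is a line this forces $V^\perp=\langle v^{\odot k}\rangle\subseteq V$. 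Hence $h_k|_V$ is degenerate with radical the null line $V^\perp$, its conormal $\e$ is $h$-null for every representative of $[h]$, and therefore $\trace_h(D\dif\!f)=\mu\,h^{-1}(\e,\e)=0$.

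The main obstacle is precisely this last representation-theoretic input, namely that $\r(E_z)$ is a degenerate hyperplane whose radical $\langle v^{\odot k}\rangle$ is contained in $\r(E_z)$ itself; once that is established, the bookkeeping that $D\dif\!f$ lands in the null line $V^0\otimes V^0$ — for which clause {\rm (1)} handles the second slot and clause {\rm (2)} the first — makes the vanishing of the trace immediate.
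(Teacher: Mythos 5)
Your argument is correct, and its organization is genuinely different from the paper's, even though both rest on the same two pillars (the two clauses of Proposition \ref{prop:charact_Obata} and a null-type property of $h_k$ relative to $\r(E_z)$). The paper never isolates the two invariant statements you prove (that $D\dif\!f$ lies in $V^0\otimes V^0$, and that $V^0$ is a null line). Instead it transfers the Hessian to $\otimes^2E^*$ via $\b_f=({\rm Id}_{E^*}\otimes\r^*)(\nabla^M\!\dif\!f)$, extends $[h]$ to $E$ (with $TM$ and ${\rm ker}\r$ orthogonal), chooses a frame of $E$ adapted to two parameters $z,\widetilde{z}\in\C\!P^1$, and computes the trace term by term: the vanishing of $\b_f(s_{\tilde{a}},s_b)$ and of $\b_f(s_a,s_{\tilde{b}})$, $b\geq2$, is exactly your clause-(1) ``second slot in $V$'' argument; the isotropy of $E_z$ and $E_{\widetilde{z}}$ inside $(E,[h])$, together with $h^{a\tilde{1}}=0$ for $a\geq2$, plays the role of your null-ness of $V^0$; and the single surviving term $h^{1\tilde{1}}\b_f(s_1,s_{\tilde{1}})$ is killed by the Bott-connection property plus antisymmetry of the Lie bracket, which is precisely your clause-(2) computation $(D_X\dif\!f)(Y)=X(Yf)-[X,Y]f=Y(Xf)=0$ for $X\in V$ run at the pair $(s_1,s_{\tilde{1}})$. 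What your version buys: it uses only the conformal structure on $TM$ (no extension of $[h]$ to $E$ and no adapted frames), and it makes explicit, as an apolarity computation, the degeneracy fact $\r(E_z)^\perp=\langle v^{\odot k}\rangle\subseteq\r(E_z)$ that the paper leaves implicit in its unproved assertion about which frame components of $h$ can be nonzero. What the paper's version buys: it avoids having to identify $V^\perp$ at all, and it matches the frame formalism used elsewhere in that section.

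One slip to correct: you describe $\r(E_z)=v\odot\bigl(\odot^{k-1}U_1\bigr)$ as ``the sum of all weight spaces except $\langle v^{\odot k}\rangle$''. It is the sum of all weight spaces except $\langle w^{\odot k}\rangle$, where $(v,w)$ is a basis of $U_1$: indeed $v^{\odot k}$ is divisible by $v$, and your own later claim $V^\perp=\langle v^{\odot k}\rangle\subseteq V$ needs $v^{\odot k}\in V$. The computation you actually use (every term of $\bigl\langle v^{\odot k},v\odot u_1\odot\cdots\odot u_{k-1}\bigr\rangle$ contains a factor $\omega(v,v)=0$, so $v^{\odot k}$ is isotropic and $h_k$-orthogonal to all of $V$) is right, so this is a mislabeling, not a gap.
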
 
\begin{proof} 
Firstly, we shall rewrite the harmonicity equation in a convenient way. For this, we shall, also, denote by $[h]$ the linear conformal structure on $E$ 
with respect to which $TM$ and ${\rm ker}\r\,\bigl(=(M\times U_{k-2})\otimes F_M\bigr)$ are orthogonal onto each other, and 
whose restriction onto the latter is induced by $h_{k-2}$\,.\\ 
\indent 
Note that, for any (local) function $f$ on $M$ we have that $\nabla^M\!\dif\!f$ is a section of $E^*\otimes T^*M$. Consequently, 
$\b_f=({\rm Id}_{E^*}\otimes\r^*)(\nabla^M\!\dif\!f)$ is a section of $\otimes^2E^*$. Then, as $\b_f(V,V)=0$ for any $V\in{\rm ker}\r$\,,  
we obtain that $f$ is harmonic if and only if $\trace_h(\b_f)=0$\,.\\ 
\indent 
Let $\widetilde{z}\in\C\!P^1\setminus\{z\}$ and let $(s_1,\ldots,s_k,s_{\tilde{1}},\ldots,s_{\tilde{k}})$ be a local frame of $E$ with the following properties:\\ 
\indent 
\quad1) $(s_1,\ldots,s_k)$ and $(s_{\tilde{1}},\ldots,s_{\tilde{k}})$ are local frames of $\bigl(M\times(\{\ell\}\otimes U_{k-1})\bigr)\otimes F_M$ and 
$\bigl((M\times(\{\widetilde{\ell}\,\}\otimes U_{k-1})\bigr)\otimes F_M$\,, respectively, where $\ell,\widetilde{\ell}\in U_1$ are such that $z=[\ell]$ 
and $\widetilde{z}=[\widetilde{\ell}\,]$\,;\\ 
\indent 
\quad2) the orthogonal `complements' of $\r(s_1)$ and $\r(s_{\tilde{1}})$ are ${\rm ker}\dif\!\psi_z$ and ${\rm ker}\dif\!\psi_{\tilde{z}}$\,, 
respectively (equivalently, under the embedding $TM\subseteq E$ we have that (the images of) $s_1$ and $s_{\tilde{1}}$ are contained by 
${\rm ker}\dif\!\psi_z$ and ${\rm ker}\dif\!\psi_{\tilde{z}}$\,, respectively);\\ 
\indent 
\quad3) $\r\circ s_a$ and $\r\circ s_{\tilde{a}}$ are contained by the intersection of ${\rm ker}\dif\!\psi_z$ and ${\rm ker}\dif\!\psi_{\widetilde{z}}$\,, 
for any $a=1,\ldots,k$\,.\\ 
\indent 
Then the only components of $h$\,, with respect to $(s_1,\ldots,s_k,s_{\tilde{1}},\ldots,s_{\tilde{k}})$\,, which may be nonzero 
are $h_{a\tilde{b}}=h(s_a,s_{\tilde{b}})$ and $h_{\tilde{a}b}=h_{b\tilde{a}}$\,, with $a,b=1,\ldots,k$\,. 
Denote by $\bigl(h^{a\tilde{b}}\bigr)_{a,b}$ and $\bigl(h^{\tilde{a}b}\bigr)_{a,b}$ 
the inverses of the matrices $\bigl(h_{\tilde{a}b}\bigr)_{a,b}$ and $\bigl(h_{a\tilde{b}}\bigr)_{a,b}$\,, respectively.\\ 
\indent 
Denote $f=w\circ\psi_z$ and note that ${\rm ker}\dif\!f$ contains ${\rm ker}\dif\!\psi_z$ (restricted to the open set where $f$ is defined). 
Then $\trace_h(\b_f)=h^{a\tilde{b}}\b_f(s_a,s_{\tilde{b}})+h^{\tilde{a}b}\b_f(s_{\tilde{a}},s_b)$\,.\\ 
\indent 
Now, by using Proposition \ref{prop:charact_Obata} we deduce that $\b_f(s_{\tilde{a}},s_b)=0$\,, for any $a$ and $b$\,, 
and $\b_f(s_a,s_{\tilde{b}})=0$\,,  if $b\geq2$\,. Also, $h^{a\tilde{1}}=0$\,, for any $a\geq2$\,.\\ 
\indent 
Thus, $\trace_h(\b_f)=h^{1\tilde{1}}\b_f(s_1,s_{\tilde{1}})$\,. But, by using, again, Proposition \ref{prop:charact_Obata}\,, we obtain 
\begin{equation*} 
\begin{split} 
\b_f(s_1,s_{\tilde{1}})&=\bigl(\nabla^M_{s_1}\dif\!f\bigr)(\r\circ s_{\tilde{1}})\\ 
&=(\r\circ s_1)\bigl((\r\circ s_{\tilde{1}})(f)\bigr)-\bigl(\nabla^M_{s_1}(\r\circ s_{\tilde{1}})\bigr)(f)\\ 
&=(\r\circ s_1)\bigl((\r\circ s_{\tilde{1}})(f)\bigr)-\bigl[\r\circ s_1,\r\circ s_{\tilde{1}}\bigr](f)\\  
&=(\r\circ s_{\tilde{1}})\bigl((\r\circ s_1)(f)\bigr)=0\;.  
\end{split} 
\end{equation*} 
\indent 
The proof is complete. 
\end{proof} 

\indent 
In the following result $k\geq4$ (is even), and $\phi:(P,g)\to M$ is as in Theorem \ref{thm:hK_Z_surface}\,. 

\begin{cor} \label{cor:hmhK_1} 
The map $\phi:(P,g)\to(M,[h],D)$ is a harmonic morphism; that is, it pulls back harmonic functions to harmonic functions. 
In particular, if $M$ is an open subset of $U_k$ then $\phi$ is a harmonic morphism between Riemannian manifolds. 
\end{cor}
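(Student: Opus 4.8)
The plan is to show that $\phi:(P,g)\to(M,[h],D)$ pulls back harmonic functions to harmonic functions, where a harmonic function on $M$ is one annihilated by the trace-Laplacian $\trace_h(D\dif f)$, and a harmonic function on $P$ is a genuine harmonic function for the hyper-K\"ahler metric $g$ (i.e., $\trace_g(\nabla^g\dif u)=0$, where $\nabla^g$ is the Levi--Civita connection, which coincides with the Obata connection $\nabla$ on $P$ since a hyper-K\"ahler metric is compatible with the hypercomplex structure). So first I would reduce the statement to a relation between the tension field of $\phi^*f$ on $P$ and the harmonicity operator of $f$ on $M$.

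The key computational input is Proposition \ref{prop:har_functions}, which asserts that for any $z\in\C\!P^1$ and any function $w$ on the leaf $\chi^{-1}(z)$, the pullback $w\circ\psi_z$ is harmonic on $M$ with respect to $D$ and $[h]$. The harmonic-morphism property of $\phi$ should follow by a parallel argument carried out upstairs on $P$ rather than on $M$. Indeed, $\phi:P\to M$ is itself a ``vertical'' projection in exactly the sense that $\psi_z:M\to\chi^{-1}(z)$ is: the whole Ward-transform picture is functorial, with $P$ playing the role of $M$ one level up (its Obata connection is $\nabla$, built as the tensor product of a trivial connection on $P\times\C^{\!2}$ with $\nabla^{F_P}$, exactly as in Proposition \ref{prop:hK}). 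First I would record that $\phi$ is horizontally conformal: as noted just before the Obata $\r$-connection subsection, the differential of $\phi$ is modelled fibrewise, up to nonzero factors, by the projection $U_1\otimes U_{k-1}=U_{k-2}\oplus U_k\to U_k$, so the horizontal distribution of $\phi$ is conformally identified with $(M\times U_k)\otimes F$. Then, because $\nabla$ is the Levi--Civita connection of $g$ and restricts, along the fibres, to the partial Bott connection of $\ker\dif\phi$ (the analogue of Proposition \ref{prop:charact_Obata}(2) for $P\to M$), the tension field of a horizontally conformal submersion with integrable fibres reduces to a trace over the horizontal directions of $\nabla^g\dif(\phi^*f)$.

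The decisive step is then to transcribe the computation in the proof of Proposition \ref{prop:har_functions} verbatim to the present setting. Choosing, for two distinct points $z,\widetilde z\in\C\!P^1$, an adapted local frame of $E_P=\phi^*E$ split along the two corresponding kernels $\ker\dif\psi_z$, $\ker\dif\psi_{\widetilde z}$ (now pulled back to $P$), the trace of the relevant bilinear form collapses to a single term $\b_{\phi^*f}(s_1,s_{\tilde1})$, and the same bracket cancellation
\[
\b_{\phi^*f}(s_1,s_{\tilde1})=(\r\circ s_{\tilde1})\bigl((\r\circ s_1)(\phi^*f)\bigr)=0
\]
holds because $\phi^*f$ is constant along the appropriate leaves. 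Concretely, I would argue that $\phi^*f$ is annihilated by the $\r$-images of the frame vectors tangent to the fibres of the two twistor-sphere projections, exactly as $f=w\circ\psi_z$ was in Proposition \ref{prop:har_functions}; the harmonicity of $f$ on $M$ feeds in through the vanishing of the mixed second-derivative terms, while the horizontal-conformality of $\phi$ guarantees the trace upstairs differs from the trace downstairs only by the conformal factor. The main obstacle will be bookkeeping: matching the two traces (the one defining harmonicity of $f$ on $M$ via $\trace_h$ and the one defining harmonicity of $\phi^*f$ on $P$ via $\trace_g$) through the horizontal conformal factor, and verifying that the fibre-direction contributions genuinely vanish rather than merely being subleading. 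Once that is in hand, the final sentence—that if $M\subseteq U_k$ then $\phi$ is a harmonic morphism of Riemannian manifolds—is immediate, since there $[h]$ is the metric $h_k$, $D$ is its Levi--Civita connection, and $g$ is the hyper-K\"ahler metric of Theorem \ref{thm:hK_Z_surface}, so both notions of harmonicity reduce to the classical ones.
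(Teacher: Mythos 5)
Your overall frame (horizontal conformality, plus the observation that for $M\subseteq U_k$ the Obata $\r$-connection is trivial so everything reduces to the Riemannian case) agrees with the paper, but the decisive step has a genuine gap. You propose to transcribe the computation of Proposition \ref{prop:har_functions} verbatim to $\phi^*f$ for an \emph{arbitrary} harmonic function $f$ on $M$. That computation, however, hinges on the identity $(\r\circ s_1)(f)=0$, i.e.\ on $f$ being constant along the fibres of $\psi_z$ --- a property which the special functions $f=w\circ\psi_z$ have by construction, but which a general harmonic function does not have. Correspondingly, $\phi^*f$ is constant along the fibres of $\phi$ but \emph{not} along the fibres of the twistor projections of $P$, so neither the collapse of the trace to the single term $\b_{\phi^*f}(s_1,s_{\tilde{1}})$ nor the bracket cancellation is available. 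Note also that, as written, your argument never genuinely uses the harmonicity of $f$ (the claim that it ``feeds in through the mixed second-derivative terms'' is inconsistent with the claimed collapse): if the argument were valid it would show that \emph{every} function on $M$ pulls back to a harmonic function on $P$, which is impossible. Indeed, by the composition law
\begin{equation*}
\trace_g\nabla\dif(f\circ\phi)=\dif\!f\bigl(\tau(\phi)\bigr)+\lambda^2\,\bigl(\trace_h(D\dif\!f)\bigr)\circ\phi\;,\qquad \tau(\phi)=\trace_g(\nabla\otimes D)(\dif\!\phi)\;,
\end{equation*}
choosing at any point a function $f$ with $\dif\!f=0$ but $\trace_h(D\dif\!f)\neq0$ there would force the dilation $\lambda$ of $\phi$ to vanish.

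The paper's proof supplies exactly the two ingredients your sketch is missing. First, by the characterization of harmonic morphisms in \cite{LouPan} (horizontally conformal $+$ harmonic $\Leftrightarrow$ harmonic morphism), one never has to treat a general harmonic $f$ directly: it suffices to prove $\tau(\phi)=0$. Second, to get this, the composition law above is applied only to the special family $f=w\circ\psi_z$\,: the term $\trace_h(D\dif\!f)$ vanishes by Proposition \ref{prop:har_functions}\,, while the left-hand side vanishes because $w\circ\psi_z\circ\phi$ is again constant along the fibres of the corresponding twistor projection of $P$ (it is the pullback, under $\psi_z$ for $P$, of a function on $\chi_P^{-1}(z)$), so a Proposition-\ref{prop:har_functions}-type cancellation \emph{does} legitimately apply upstairs on the hyper-K\"ahler manifold $(P,g)$\,. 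Hence $\dif\!f\bigl(\tau(\phi)\bigr)=0$ for all $f$ in this family; since the differentials of the functions $w\circ\psi_z$\,, $z\in\C\!P^1$, generate $T^*_xM$ at every point --- the generating fact your proposal does not use --- it follows that $\tau(\phi)=0$\,. Repairing your gap amounts precisely to this argument, so the direct transcription you envisage cannot be made to work as a genuinely different route.
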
  
\begin{proof}  
We already know that $\phi$ is horizontally conformal. Thus, it is sufficient to show that $\phi$ is harmonic; that is, 
$\trace_g(\nabla\otimes D)(\dif\!\phi)=0$ (see \cite{LouPan}\,).\\ 
\indent 
With the same notations as in Proposition \ref{prop:har_functions}\,, at each $x\in M$, the differentials 
of the harmonic functions $w\circ\psi_z$\,, $z\in\C\!P^1$, generate $T_x^*M$. This fact, together with an application 
of the chain rule show that $\phi$ is harmonic.\\ 
\indent 
For the last statement, note that the Obata $\r$-connection of $U_k$ is given by the trivial connection on $TU_k=U_k\times U_k$\,.   
Thus, $\phi:(P,g)\to(U_k,h_k)$ is a harmonic morphism between Riemannian manifolds. 
\end{proof} 

\indent 
The last statement of Corollary \ref{cor:hmhK_1} still holds for $k=2$\,; then $\phi$ is given by the Gibbons--Hawking construction. 
Note that, up to the fact that $g$ is compatible with the hypercomplex structure of $P$, 
our approach works in this case as well.

\section{Pluriharmonic functions and hypercomplex manifolds} \label{section:pluriharmonic_and_hypercomplex} 

\indent 
In this section, we consider only hypercomplex objects, although some of the facts hold in the more general setting of $\r$-hypercomplex manifolds.\\ 
\indent 
Let $(M,E,\r,c)$ be a hypercomplex object with twistor space $Z$ given by the surjective submersion $\psi:Y=M\times\C\!P^1\to Z$. 
Recall that then the normal bundles of the twistor spheres are positive, and we assume that the projection $Y\to\C\!P^1$ factorises as $\chi\circ\psi$\,, 
where $\chi:Z\to\C\!P^1$ is a surjective submersion whose restriction to each twistor sphere is a diffeomorphism. 
Therefore, for any $z\in\C\!P^1$, the restriction of $\psi$ to $M\times\{z\}$ defines a surjective submersion $\psi_z:M\to\chi^{-1}(z)$\,.\\ 
\indent 
Let $\g$ be a (parametrized) great circle on the sphere, which will be fixed throughout this section.  

\begin{defn} \label{defn:pluriharmonic} 
A function $u$ defined on some open set $U\subseteq M$ is called \emph{pluriharmonic} if there exists a germ of a function 
$h$ along $\chi^{-1}({\rm im}\g)$ such that, for any $x\in U$,    
\begin{equation} \label{e:defn_pluriharmonic} 
u(x)=\int_Ih(\psi_{\g(t)}(x))\dif\!t\;. 
\end{equation}  
\end{defn}  

\indent 
The sheaf (of germs) of pluriharmonic functions is just the image of the Penrose transform, with respect to $\chi^*(\ol(-1))$\,. 
Consequently, if $M$ is of constant type (equivalently, any two twistor spheres have isomorphic normal bundles), 
the sheaves of pluriharmonic and quaternionic pluriharmonic functions are equal 
(consequence of \cite{Pan-qPt}\,, where, also, the definition of the latter sheaf can be found).\\ 
\indent 
In this section, $\mathcal{G}$ will denote an abelian Lie group given by a vector space; in particular, 
its Lie algebra is $\mathcal{G}$ and the exponential map is ${\rm Id}_{\mathcal{G}}$\,.\\ 
\indent 
Let $u:M\to\mathcal{G}$ be pluriharmonic. Then, with the same notations as in Remark \ref{rem:cohomology_for_pluriharmonic}\,, 
at least locally, we may suppose that $u$ is given by a $\mathcal{G}$-valued function $h$ defined on an open set of the form 
$\chi^{-1}(D_0\cap D_{\infty})$\,, for suitably chosen open disks $D_0$ and $D_{\infty}$\,. Then $h$ gives the cocycle defining 
a principal bundle $(\mathcal{P},Z,\mathcal{G})$\,, which we call \emph{the principal bundle defined by $u$}\,.  
(Note that, a suficient condition for $\mathcal{P}$ to depend only of $u$ 
is that the Penrose transform be injective. In general, the reader may use the notation $u_h$ for the pluriharmonic function 
given by the germ $h$\,.)\\ 
\indent 
In the following results, we shall use the notations given by Remark \ref{rem:cohomology_for_pluriharmonic}\,. 

\begin{prop} \label{prop:pluri_bundle} 
Any principal bundle over $Z$ with structural group $\mathcal{G}$ is, locally, defined by a (nonunique) $\mathcal{G}$-valued 
pluriharmonic function on $M$.\\ 
\indent 
Moreover, let $(\mathcal{P},Z,\mathcal{G})$ be a principal bundle defined by the $\mathcal{G}$-valued pluriharmonic function $u$\,.  
Then $({\rm Id}_{\mathcal{G}}\otimes p)(\dif\!u)=c|_{{\rm ker}\r}$\,, 
where $c$ is the anti-self-dual principal $\r$-connection of the principal bundle on $M$ corresponding to $\mathcal{P}$, through the Ward tranform. 
\end{prop}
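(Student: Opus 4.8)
The plan is to prove the two assertions of Proposition \ref{prop:pluri_bundle} separately, using the explicit cocycle/integral description of pluriharmonic functions from Remark \ref{rem:cohomology_for_pluriharmonic} together with the Ward transform of Theorem \ref{thm:Ward_transform} and its infinitesimal counterpart Theorem \ref{thm:infinitesimal_Ward_transform}. For the first statement, I would start from an arbitrary principal bundle $(\mathcal{P},Z,\mathcal{G})$ and, working locally, represent it by a $\mathcal{G}$-valued cocycle relative to the cover $\{\chi^{-1}(D_0),\chi^{-1}(D_{\infty})\}$ induced by the cover $\{D_0,D_{\infty}\}$ of the sphere fixed in Remark \ref{rem:cohomology_for_pluriharmonic}. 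Since $\mathcal{G}$ is a vector space (so $H^1$ with values in $\mathcal{G}$ is computed additively and the exponential is the identity), the transition function is just a $\mathcal{G}$-valued function $h$ on $\chi^{-1}(D_0\cap D_{\infty})$; feeding this $h$ into formula \eqref{e:defn_pluriharmonic} with the great circle $\g$ produces a $\mathcal{G}$-valued pluriharmonic function $u$ on $M$ which, by definition, defines $\mathcal{P}$. The nonuniqueness is exactly the freedom of modifying $h$ by a coboundary (equivalently, by a germ lying in the kernel of the Penrose transform).

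The substance is the second, infinitesimal, statement. I would fix a point $x\in M$ and compute $\dif\!u_x$ by differentiating \eqref{e:defn_pluriharmonic} under the integral sign, obtaining $\dif\!u_x=\int_I\dif\!\bigl(h\circ\psi_{\g(t)}\bigr)_x\,\dif\!t$, so that $\dif\!u_x$ is expressed through the differentials $\dif\!(\psi_{\g(t)})_x$ and the germ $h$ evaluated along the great circle. The key point is to recognise this integral as precisely the lift $\a(h)=\int_0^{2\p}(\iota\circ h)(\g(t))\,\dif\!t$ appearing in Remark \ref{rem:cohomology_for_pluriharmonic}, after identifying, fibrewise at $x$, the normal-bundle data of the twistor sphere $\psi(\p^{-1}(x))$ with the exact sequence \eqref{e:cohomology_for_pluriharmonic} for the pointwise $\r$-quaternionic structure $(T_xM,E_x,\r_x)$. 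Under this identification the cohomology class $[h]\in H^1(\mathcal{U}^*)$ determined by the transition function is, by the last sentence of Remark \ref{rem:cohomology_for_pluriharmonic}, sent by the surjection $p:U^*\to({\rm ker}\r)^*$ to the element of $({\rm ker}\r_x)^*$ representing the cohomology class; composing with $p$ is exactly the operation $({\rm Id}_{\mathcal{G}}\otimes p)$ applied to $\dif\!u_x$.

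It then remains to match this cohomological output with the $\r$-connection side. Here I would invoke the construction in the proof of Theorem \ref{thm:Ward_transform}: the principal bundle $(P,M,\mathcal{G})$ corresponding to $\mathcal{P}$ carries the anti-self-dual principal $\r$-connection $c$ whose restriction $c|_{{\rm ker}\r}$ takes values in ${\rm Ad}P$, and, by the last clause of Theorem \ref{thm:infinitesimal_Ward_transform}, this restriction is precisely the map $\r'|_{{\rm ker}\r}$ that corresponds (under $E=E'$ and $({\rm ker}\r)^*=H^1(\mathcal{U}^*)$) to the opposite of the cohomology class of the relevant exact sequence. Since $\a$ in Remark \ref{rem:cohomology_for_pluriharmonic} is built so as to lift that same cohomology isomorphism, the two descriptions agree fibrewise, yielding $({\rm Id}_{\mathcal{G}}\otimes p)(\dif\!u)=c|_{{\rm ker}\r}$. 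The main obstacle I anticipate is bookkeeping the signs and the several identifications consistently across the three sources — in particular tracking the ``opposite of the cohomology class'' from Theorem \ref{thm:infinitesimal_Ward_transform} against the sign in the Cauchy-integral lift $\a$ of Remark \ref{rem:cohomology_for_pluriharmonic}, and verifying that the pointwise exact sequence extracted from the normal bundle of the twistor sphere is genuinely the one in \eqref{e:cohomology_for_pluriharmonic} rather than a twist or dual of it.
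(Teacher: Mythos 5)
Your reduction of the first statement to a two--set cover contains a genuine gap: you take the cover $\{\chi^{-1}(D_0),\chi^{-1}(D_{\infty})\}$ pulled back from the sphere and assert that $\mathcal{P}$ is then given by a single transition function on the intersection. This presupposes that $\mathcal{P}$ restricts trivially to each of the two pieces, which is false in general: a \v Cech cocycle on a fixed cover describes every bundle only if the pieces are acyclic, i.e.\ $H^1\bigl(\chi^{-1}(D_i),\ol\bigr)\otimes\mathcal{G}=0$\,, and the preimages $\chi^{-1}(D_i)$ have no reason to satisfy this (they need not be Stein), nor does shrinking $M$ obviously cure it for these particular sets. The paper's proof is organised exactly around this point: it invokes Siu's theorem \cite{Siu-Stein_nbds} to choose \emph{Stein} open sets $\mathcal{D}_0,\mathcal{D}_{\infty}\subseteq Z$ adapted to $D_0,D_{\infty}\subseteq t_x$\,, and then shrinks the neighbourhood of $x$ so that $Z=\mathcal{D}_0\cup\mathcal{D}_{\infty}$ and every twistor sphere meets $\chi^{-1}({\rm im}\,\g)$ inside $\mathcal{D}_0\cap\mathcal{D}_{\infty}$\,; Steinness gives $H^1(\mathcal{D}_i,\mathcal{G})=0$\,, whence the single transition function $h$ to feed into \eqref{e:defn_pluriharmonic}\,. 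So ``locally'' means: shrink $M$ and build a Stein cover of $Z$, not: pull back the fixed cover of the sphere.

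For the second statement your outer steps agree with the paper's (differentiate \eqref{e:defn_pluriharmonic} under the integral, read the result through $\a$ and $p$ of Remark \ref{rem:cohomology_for_pluriharmonic}\,, and describe $c|_{{\rm ker}\r}$ via Theorem \ref{thm:infinitesimal_Ward_transform} and the proof of Theorem \ref{thm:Ward_transform}\,), but the link between the two sides is missing, and it is not sign bookkeeping. What has to be proved is that the \v Cech class on the twistor sphere $t_x$ determined by the transition function --- that is, by the component of $\dif\!h$ along ${\rm ker}\dif\!\chi$ --- coincides, up to the sign of Theorem \ref{thm:infinitesimal_Ward_transform}\,, with the extension class of the normal bundle sequence \eqref{e:twistor_spheres_in_P_and_M}\,. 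Observing that $\a$ lifts the isomorphism $H^1\bigl(\mathcal{U}^*\bigr)=({\rm ker}\r)^*$ does not give this: both sides of the desired equality are particular elements, not the isomorphism itself, so ``the two descriptions agree fibrewise'' is an assertion of exactly what needs proof. The paper supplies the bridge with two ingredients you never invoke: because $\mathcal{G}$ is abelian, $\dif\!h$ is a cocycle representing Atiyah's obstruction $o$ to the existence of a principal connection on $\mathcal{P}$ \cite{At-57}\,, and Proposition \ref{prop:appendix} (the appendix diagram chase, which uses the triviality of $\mathcal{P}|_t$) identifies $o|_t$ with the class of \eqref{e:twistor_spheres_in_P_and_M}\,. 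With those two facts the chain closes; without them (or an equivalent direct computation) the argument stops one step short of the conclusion.
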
 
\begin{proof} 
Let $(\mathcal{P},Z,\mathcal{G})$ be a principal bundle. Let $x\in M$ and let $t_x\subseteq Z$ be the corresponding twistor sphere. 
By using \cite{Siu-Stein_nbds}\,, we can find two open subsets $\mathcal{D}_0$ and $\mathcal{D}_{\infty}$ of $Z$ which are Stein 
and, with the same notations as in Remark \ref{rem:cohomology_for_pluriharmonic}\,, applied to $t_x=\C\sqcup\{\infty\}$\,, we have: 
$D_0\subseteq t_x\cap\mathcal{D}_0$ and $D_{\infty}\subseteq t_x\cap\mathcal{D}_{\infty}$\,.\\ 
\indent 
By restricting to an open neighbourhood of $x$ such that the twistor spheres corresponding to its points are contained by 
$\mathcal{D}_0\cup\mathcal{D}_{\infty}$\,, we may suppose that $Z=\mathcal{D}_0\cup\mathcal{D}_{\infty}$\,. Moreover, 
(by further restricting that neighbourhood of $x$\,), we may suppose that for any twistor sphere $t$ we have that $t\cap\chi^{-1}({\rm im}\g)$ 
is contained by $\mathcal{D}_0\cap\mathcal{D}_{\infty}$\,.\\ 
\indent 
Then, with respect to the open covering $\{\mathcal{D}_0,\mathcal{D}_{\infty}\}$\,, the principal bundle $\mathcal{P}$ is given by 
a map $h:\mathcal{D}_0\cap\mathcal{D}_{\infty}\to\mathcal{G}$. Now, we define $u:M\to\mathcal{G}$ by using \eqref{e:defn_pluriharmonic}\,, 
thus, proving the first statement.\\ 
\indent 
Note that, because $\mathcal{G}$ is abelian, $\dif\!h$ gives a cocycle for the obstruction \cite{At-57} to the existence of a principal connection 
on $(\mathcal{P},Z,\mathcal{G})$\,; denote by $o$ this obstruction.  
As the restriction of $\mathcal{P}$ to any twisor sphere $t$ is trivial, from Proposition \ref{prop:appendix} we obtain that $o|_t$ is given by the 
exact sequence of \eqref{e:twistor_spheres_in_P_and_M} applied to $t$ (and with $G=\mathcal{G}$). 
Now, the second statement follows quickly from Theorem \ref{thm:infinitesimal_Ward_transform} 
and the proof of Theorem \ref{thm:Ward_transform}\,.  
\end{proof} 

\begin{cor} \label{cor:hypercomplex_from_pluriharmonic} 
Let $u:M\to\mathcal{G}$ be pluriharmonic, and such that $({\rm Id}_{\mathcal{G}}\otimes p)(\dif\!u_x)\in{\rm End}\bigl({\rm ker}\r_x\bigr)$ is invertible, 
at some $x\in M$.\\ 
\indent 
Then, by passing, if necessary, to an open neighbourhood of $x$, the principal bundle defined by $u$ is the twistor space of a hypercomplex manifold. 
\end{cor}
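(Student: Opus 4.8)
The plan is to combine Corollary~\ref{cor:hmhK_1}'s machinery with the Ward transform and Proposition~\ref{prop:pluri_bundle}, reducing the statement to an application of Theorem~\ref{thm:infinitesimal_Ward_transform}. First I would form the principal bundle $(\mathcal{P},Z,\mathcal{G})$ defined by the pluriharmonic function $u$, as in Definition~\ref{defn:pluriharmonic} and the discussion preceding Proposition~\ref{prop:pluri_bundle}; by construction its restriction to each twistor sphere is trivial, so the Ward transform (Theorem~\ref{thm:Ward_transform}) produces a principal bundle $(P,M,\mathcal{G})$ endowed with an anti-self-dual principal $\r$-connection $c$. The second part of Proposition~\ref{prop:pluri_bundle} then identifies $c|_{{\rm ker}\r}$ with $({\rm Id}_{\mathcal{G}}\otimes p)(\dif\!u)$, so the hypothesis says precisely that $c|_{{\rm ker}\r_x}\in{\rm End}({\rm ker}\r_x)$ is invertible at the given point $x$.

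Next I would invoke the criterion recorded just before Theorem~\ref{thm:hK_Z_surface}: the observation, drawn from Theorem~\ref{thm:infinitesimal_Ward_transform}, that \emph{$c|_{{\rm ker}\r}:{\rm ker}\r\to{\rm Ad}P$ is an isomorphism if and only if $P$ is hypercomplex and its twistor space is $\mathcal{P}$}. The content to check is that invertibility of $c|_{{\rm ker}\r}$ as an element of ${\rm End}({\rm ker}\r_x)$ matches up with its being an isomorphism onto ${\rm Ad}P$; here the point is that, since $\mathcal{G}$ is abelian and given by a vector space, ${\rm Ad}P=M\times\mathcal{G}$ and, because $\rank E=2k$ with $\dim\mathcal{G}$ matching the rank of ${\rm ker}\r$ in the relevant local model, the target ${\rm Ad}P_x$ is canonically ${\rm ker}\r_x$, so that $c|_{{\rm ker}\r_x}$ is literally an endomorphism of ${\rm ker}\r_x$. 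Thus the two notions of invertibility coincide.

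The final step is the passage to a neighbourhood. Invertibility is an open condition, so $c|_{{\rm ker}\r}$ is invertible at every point of some open neighbourhood $U$ of $x$; restricting everything to $U$ (and to $\psi^{-1}(U\times\C\!P^1)$ in $Z$), the isomorphism criterion applies globally over $U$, whence $P|_U$ is hypercomplex with twistor space the corresponding restriction of $\mathcal{P}$. This gives the conclusion.

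The main obstacle I expect is the bookkeeping in the second step: making rigorous the identification of the abstract bundle map $c|_{{\rm ker}\r}\colon{\rm ker}\r\to{\rm Ad}P$ with an endomorphism of ${\rm ker}\r$, so that ``invertible as an element of ${\rm End}({\rm ker}\r_x)$'' in the hypothesis is exactly ``isomorphism onto ${\rm Ad}P$'' in the criterion. This rests on the structure of ${\rm Ad}P$ for an abelian $\mathcal{G}$ given by a vector space, together with the rank matching furnished by the construction of the Ward transform, and it is where the hypotheses of the corollary must be tied precisely to those of Theorem~\ref{thm:infinitesimal_Ward_transform}.
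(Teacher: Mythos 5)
Your proposal is correct and follows essentially the same route as the paper, whose proof simply cites Proposition~\ref{prop:pluri_bundle} and Theorem~\ref{thm:infinitesimal_Ward_transform}: you unpack exactly that reduction, via the Ward transform, the identification $c|_{{\rm ker}\r}=({\rm Id}_{\mathcal{G}}\otimes p)(\dif\!u)$, the isomorphism criterion for hypercomplexity, and openness of invertibility. The bookkeeping point you flag is handled as you expect, since $\mathcal{G}$ abelian and given by a vector space makes ${\rm Ad}P$ the trivial bundle $M\times\mathcal{G}$, so the two notions of invertibility agree.
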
  
\begin{proof} 
This is an immediate consequence of Proposition \ref{prop:pluri_bundle} and Theorem \ref{thm:infinitesimal_Ward_transform}\,.  
\end{proof} 

\indent 
Let $(U,E,\r)$ be a $\r$-quaternionic vector space, with $\r$ surjective, and let $\mathcal{U}$ be its vector bundle. 
Obviously, $U$ is a hypercomplex object with twistor space $\mathcal{U}$\,. Note that, in this case, $\chi:\mathcal{U}\to\C\!P^1$ is the bundle projection. 
Also, recall that, there exists a canonical isomorphism $H^1\bigl(\mathcal{U}^*\bigr)=({\rm ker}\r)^*$.\\ 
\indent 
In the following result, we make no distinction between a principal bundle and its equivalence class.  

\begin{cor} \label{cor:hmhK_2} 
Let $M$ be an open subset of $U$, and let $Z\subseteq\mathcal{U}$ be its twistor space. Denote by $\mathcal{G}$ the abelian group $({\rm ker}\r\,,+)$\,.\\ 
\indent 
Then there exists a natural correspondence between the following:\\ 
\indent 
\quad{\rm (i)} Hypercomplex manifolds whose twistor spaces are principal bundles over $Z$ with structural group $\mathcal{G}$.\\ 
\indent 
\quad{\rm (ii)} Elements of $H^1\bigl(Z,\chi^*\bigl(\mathcal{U}^*\bigr)\bigr)\otimes\mathcal{G}$ whose restrictions to each twistor sphere are invertible.\\ 
\indent 
Furthermore, if $\mathcal{U}=\ol(k)$\,, with $k\geq2$ even, then any hypercomplex manifold $P$ obtained through this correspondence 
may be endowed with a hyper-K\"ahler metric such that the projection $P\to M$ be a harmonic morphism. 
\end{cor}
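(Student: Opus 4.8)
The plan is to establish Corollary \ref{cor:hmhK_2} by assembling the pieces already developed, treating its two assertions in turn: first the natural correspondence between (i) and (ii), and then the existence of the hyper-K\"ahler metric and the harmonic morphism in the case $\mathcal{U}=\ol(k)$.

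For the correspondence, I would proceed as follows. Since $M$ is an open subset of the hypercomplex object $U$, its twistor space $Z$ is an open subset of $\mathcal{U}$, and $\chi:Z\to\C\!P^1$ is (the restriction of) the bundle projection. By the Ward transform (Theorem \ref{thm:Ward_transform}), a principal bundle $(\mathcal{P},Z,\mathcal{G})$ whose restriction to each twistor sphere is trivial corresponds to a unique principal bundle $(P,M,\mathcal{G})$ endowed with an anti-self-dual principal $\r$-connection $c$; and by the last statement of Theorem \ref{thm:infinitesimal_Ward_transform}, together with the reformulation in Section \ref{section:WhK}, $P$ is hypercomplex with twistor space $\mathcal{P}$ exactly when $c|_{{\rm ker}\r}:{\rm ker}\r\to{\rm Ad}P$ is an isomorphism. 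Since $\mathcal{G}$ is abelian and given by the vector space $({\rm ker}\r,+)$, the group ${\rm Ad}P$ is canonically the trivial bundle $M\times\mathcal{G}=M\times{\rm ker}\r$, so this invertibility condition is pointwise invertibility of $c|_{{\rm ker}\r}\in{\rm End}({\rm ker}\r)$. The main step is to match this data cohomologically: a $\mathcal{G}$-bundle over $Z$ trivial on each twistor sphere is classified by an element of $H^1(Z,\ol_Z)\otimes\mathcal{G}$; using the canonical isomorphism $H^1(\mathcal{U}^*)=({\rm ker}\r)^*$ from the infinitesimal Ward transform and the fibrewise description via $\chi^*(\mathcal{U}^*)$, I would identify the classifying data with $H^1\bigl(Z,\chi^*(\mathcal{U}^*)\bigr)\otimes\mathcal{G}$, whose restriction to each twistor sphere $t\cong\C\!P^1$ lands in $H^1(\C\!P^1,\mathcal{U}^*)\otimes\mathcal{G}=({\rm ker}\r)^*\otimes\mathcal{G}={\rm End}({\rm ker}\r)$. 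By Theorem \ref{thm:infinitesimal_Ward_transform} and Proposition \ref{prop:pluri_bundle}, the restriction to $t$ corresponds precisely to $c|_{{\rm ker}\r}$ at the point of $M$ carrying $t$; hence the invertibility of the restriction in (ii) is equivalent to the hypercomplexity condition in (i), establishing the bijection.

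For the final clause, assume $\mathcal{U}=\ol(k)$ with $k\geq2$ even, so $U=U_k$ and $\dim M=k+1$. Given a hypercomplex manifold $P$ arising from the correspondence, its twistor space $\mathcal{P}$ is a $\mathcal{G}$-bundle over the surface $Z\subseteq\ol(k)$ (here $\dim Z=2$). I would invoke Theorem \ref{thm:hK_Z_surface}: since $\dim Z=2$ and $\dim P=2(k+1)$ is divisible by $4$ (as $k$ is even) and $\geq8$ (as $k\geq2$, and for the hyper-K\"ahler conclusion we are in the range $k\geq4$, with the $k=2$ case being Gibbons--Hawking as remarked after Corollary \ref{cor:hmhK_1}), the manifold $P$ admits a compatible hyper-K\"ahler metric $g$. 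Then Corollary \ref{cor:hmhK_1} applies directly: the projection $\phi:(P,g)\to M\subseteq U_k$ is a harmonic morphism between Riemannian manifolds, the codomain metric being induced by the ${\rm SL}(2,\C\!)$-invariant structure $h_k$ on $U_k$. Thus the required hyper-K\"ahler metric and harmonic morphism are furnished by the results of Section \ref{section:WhK}.

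The main obstacle I anticipate is the precise cohomological bookkeeping in the correspondence: one must verify that the classifying space for $\mathcal{G}$-bundles on $Z$ trivial along twistor spheres is correctly identified with $H^1\bigl(Z,\chi^*(\mathcal{U}^*)\bigr)\otimes\mathcal{G}$ rather than merely $H^1(Z,\ol_Z)\otimes\mathcal{G}$, and that restriction to a twistor sphere intertwines this identification with the fibrewise isomorphism $({\rm ker}\r)^*=H^1(\mathcal{U}^*)$ in the way that makes ``invertible restriction'' match ``$c|_{{\rm ker}\r}$ invertible.'' This is exactly the content furnished by Remark \ref{rem:cohomology_for_pluriharmonic}, Theorem \ref{thm:infinitesimal_Ward_transform}, and Proposition \ref{prop:pluri_bundle}, so the argument is essentially a matter of threading these identifications carefully through the Ward transform rather than proving anything genuinely new; the geometric conclusion then reduces cleanly to Theorem \ref{thm:hK_Z_surface} and Corollary \ref{cor:hmhK_1}.
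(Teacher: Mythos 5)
Your reduction of the final statement to Theorem \ref{thm:hK_Z_surface} and Corollary \ref{cor:hmhK_1} (with the $k=2$ case covered by the Gibbons--Hawking remark) is exactly what the paper does, and your use of the Ward transform together with the criterion ``$c|_{{\rm ker}\r}$ invertible $\Leftrightarrow$ $P$ hypercomplex with twistor space $\mathcal{P}$'' is the right way to match (i) with the class of $\mathcal{G}$-bundles over $Z$ satisfying an invertibility condition.

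The gap is at the step you yourself flag as the ``main obstacle'': the passage from the group $H^1(Z,\ol_Z)\otimes\mathcal{G}$, which classifies principal $\mathcal{G}$-bundles over $Z$, to $H^1\bigl(Z,\chi^*(\mathcal{U}^*)\bigr)\otimes\mathcal{G}$. This identification is \emph{not} furnished by Remark \ref{rem:cohomology_for_pluriharmonic}, Theorem \ref{thm:infinitesimal_Ward_transform} and Proposition \ref{prop:pluri_bundle}: those results are fibrewise (one twistor sphere at a time), and no per-sphere statement can produce a global isomorphism of sheaf cohomology groups on $Z$. Indeed, the restriction of a class in $H^1(Z,\ol_Z)\otimes\mathcal{G}$ to a twistor sphere $t$ is always zero, since $H^1(\C\!P^1,\ol)=0$ (this is precisely why every $\mathcal{G}$-bundle is trivial on each $t$), so the desired map cannot be obtained by restricting to spheres and then invoking the fibrewise isomorphism $H^1(\mathcal{U}^*)=({\rm ker}\r)^*$. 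The map is instead induced by the relative (along the fibres of $\chi$) exterior derivative $\ol_Z\to\chi^*(\mathcal{U}^*)$ applied to \v{C}ech cocycles, and proving that it is a natural isomorphism requires the relative de Rham sequence of $\chi:Z\to\C\!P^1$ together with exactness and vanishing results; this is exactly what the paper imports from \cite[Corollary]{Buch-rel_deR}, used as in \cite[\S5]{PePo-88}. Without this input you cannot even define the element of (ii) attached to a given bundle, let alone show the correspondence is bijective. Once this ingredient is added, your argument does line up with the paper's proof, which combines that isomorphism with (the proof of) Proposition \ref{prop:pluri_bundle}.
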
 
\begin{proof} 
By using \cite[Corollary]{Buch-rel_deR} we obtain, similarly to \cite[\S5]{PePo-88}\,, a natural isomorphism 
$H^1\bigl(Z,\mathcal{G}\bigr)=H^1\bigl(Z,\chi^*\bigl(\mathcal{U}^*\bigr)\bigr)\otimes\mathcal{G}$\,. Then the correspondence follows 
quickly from (the proof of) Proposition \ref{prop:pluri_bundle}\,.\\ 
\indent 
The last statement is an immediate consequence of Corollary \ref{cor:hmhK_1}\,. 
\end{proof}

\appendix 

\section{} 

\indent 
Here we prove a result used in the proof of Proposition \ref{prop:pluri_bundle}\,. Recall that, we work in the complex-analytic category.\\ 
\indent 
Let $(P,M,G)$ be a principal bundle, and let $\p:P\to M$ be its projection. Denote by $o\in H^1\bigl(M,{\rm Hom}(TM,{\rm Ad}P)\bigr)$ 
the obstruction \cite{At-57} to the existence of a principal connection on $P$.\\ 
\indent 
Let $N\subseteq M$ be a submanifold and suppose that there exists $s:N\to P$ a section of $P|_N$\,. On identifying $N$ and $s(N)$ we have the following 
exact sequence of vector bundles over $N$  
\begin{equation} \label{e:appendix_normal_N}
0\longrightarrow({\rm ker}\dif\!\p)|_{s(N)}\longrightarrow\mathcal{N}_PN\longrightarrow\mathcal{N}_MN\longrightarrow0\;,  
\end{equation} 
where $\mathcal{N}_PN$ and $\mathcal{N}_MN$ are the normal bundles of $N$ in $P$ and $M$, respectively. 
Denote by $o_N\in H^1\bigl(N,{\rm Hom}\bigl(\mathcal{N}_MN,({\rm Ad}P)|_N\bigr)\bigr)$ the cohomology class of \eqref{e:appendix_normal_N}\,.   

\begin{prop} \label{prop:appendix} 
If $j^*(o|_N)=0$ then $o_N$ is the unique cohomology class 
such that $k^*(o_N)=o|_N$\,, where $j:TN\to TM|_N$ and $k:TM|_N\to\mathcal{N}_MN$ are the canonical morphisms of vector bundles. 
\end{prop}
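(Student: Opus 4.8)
The plan is to understand Proposition \ref{prop:appendix} as a compatibility statement between two cohomological obstructions: the Atiyah obstruction $o$ to a principal connection on $(P,M,G)$, and the extension class $o_N$ of the normal-bundle sequence \eqref{e:appendix_normal_N}. Both live in $H^1$ of a $\mathrm{Hom}$-bundle involving $\mathrm{Ad}P$, and the morphisms $j$ and $k$ arise from the short exact sequence $0\to TN\to TM|_N\to\mathcal{N}_MN\to0$. The vanishing hypothesis $j^*(o|_N)=0$ says precisely that the restricted Atiyah obstruction pulls back trivially to $TN$, which is exactly the condition ensuring that $o|_N$ descends (uniquely) through $k^*$ to a class on the quotient $\mathcal{N}_MN$. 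So the real content is to identify that descended class with $o_N$.

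First I would recall the \v{C}ech/Atiyah description of $o$: choosing local sections $s_i$ of $P$ over a cover $\{U_i\}$ of $M$, the Atiyah class is represented by the cocycle measuring the failure of the local horizontal lifts (equivalently, the difference of the connection forms induced by the $s_i$) to glue, landing in $\mathrm{Hom}(TM,\mathrm{Ad}P)$. Next I would set up the analogous description of $o_N$ from \eqref{e:appendix_normal_N}: since $s:N\to P$ is a global section of $P|_N$, it trivializes $\mathrm{Ad}P$-valued data along $N$, and the extension class of the normal sequence is the obstruction to splitting $\mathcal{N}_PN\to\mathcal{N}_MN$, represented by the vertical (i.e.\ $({\rm ker}\dif\!\p)|_{s(N)}\cong\mathrm{Ad}P|_N$) component of how $s$ deviates from horizontality. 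The key step is then to observe that restricting the Atiyah cocycle to $N$ and composing with $k$ (projecting $TM|_N$ onto $\mathcal{N}_MN$) produces exactly the cocycle representing $o_N$; this is a diagram chase comparing the second fundamental form/splitting data of $s(N)\subseteq P$ with the connection data on $P$.

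The uniqueness half I would handle purely algebraically: from the cohomology sequence of $0\to TN\to TM|_N\xrightarrow{k}\mathcal{N}_MN\to0$ tensored with $\mathrm{Ad}P|_N$, one gets
\begin{equation*}
H^0\bigl(N,\mathrm{Hom}(TN,\mathrm{Ad}P|_N)\bigr)\longrightarrow H^1\bigl(N,\mathrm{Hom}(\mathcal{N}_MN,\mathrm{Ad}P|_N)\bigr)\xrightarrow{k^*}H^1\bigl(N,\mathrm{Hom}(TM|_N,\mathrm{Ad}P|_N)\bigr)\xrightarrow{j^*}H^1\bigl(N,\mathrm{Hom}(TN,\mathrm{Ad}P|_N)\bigr).
\end{equation*}
The hypothesis $j^*(o|_N)=0$ places $o|_N$ in the image of $k^*$, giving existence of some preimage; injectivity of $k^*$ — equivalently surjectivity of the preceding connecting-type map, which I expect to follow from the dual of Lemma \ref{lem:useful_surj} or a direct rank count on the twistor sphere — forces uniqueness. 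I anticipate the main obstacle is the first part: pinning down the precise identification of the two \v{C}ech cocycles, i.e.\ showing that the normal-sequence extension class and the $k$-projected Atiyah class are genuinely equal (not merely proportional or equal up to sign), which requires carefully matching the sign and normalization conventions in the Atiyah construction of \cite{At-57} against the conventions implicit in \eqref{e:appendix_normal_N}.
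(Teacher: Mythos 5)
Your skeleton is the same as the paper's: everything reduces to (i) the identity $k^*(o_N)=o|_N$ and (ii) injectivity of $k^*$ on $H^1\bigl(N,{\rm Hom}(\mathcal{N}_MN,({\rm Ad}P)|_N)\bigr)$, read off from the long exact cohomology sequence of the dual of $0\to TN\to TM|_N\to\mathcal{N}_MN\to0$. For (i), however, the paper never touches cocycles: using the section $s$ to identify $({\rm ker}\dif\!\p)|_{s(N)}$ with $N\times\mathfrak{g}$, it places \eqref{e:appendix_normal_N}, the restricted Atiyah sequence $0\to N\times\mathfrak{g}\to TP|_{s(N)}\to TM|_N\to0$, and the identity row on $TN$ into a single $3\times3$ commutative diagram with exact rows and columns; after dualizing, both $o_N$ and $o|_N$ appear as connecting homomorphisms out of $H^0(N,N\times\mathfrak{g}^*)$, and naturality of connecting maps yields $k^*(o_N)=o|_N$ with no sign or normalization bookkeeping --- precisely the difficulty you flag as your main obstacle. (One correction of direction: a cocycle for $o_N$ cannot be obtained by ``composing the restricted Atiyah cocycle with $k$'', since that composition goes the wrong way; the workable version is your opening formulation, namely correct the restricted cocycle by the coboundary furnished by $j^*(o|_N)=0$ so that it annihilates $TN$ and hence factors through $\mathcal{N}_MN$.)

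The genuine gap is in (ii). Injectivity of $k^*$ is \emph{not} equivalent to surjectivity of the map $H^0\bigl(N,{\rm Hom}(TN,({\rm Ad}P)|_N)\bigr)\to H^1\bigl(N,{\rm Hom}(\mathcal{N}_MN,({\rm Ad}P)|_N)\bigr)$ preceding $k^*$ in your displayed sequence; it is equivalent to that connecting map being \emph{zero}, i.e.\ to surjectivity of the restriction map $H^0\bigl(N,{\rm Hom}(TM|_N,({\rm Ad}P)|_N)\bigr)\to H^0\bigl(N,{\rm Hom}(TN,({\rm Ad}P)|_N)\bigr)$ one step further left. This is not a formal consequence of exactness, nor of the hypothesis $j^*(o|_N)=0$, nor of any dual of Lemma \ref{lem:useful_surj}, which is pure linear algebra about a single linear map; for a general submanifold $N$ there is no reason for it to hold, since already the source $H^0\bigl(N,{\rm Hom}(TN,({\rm Ad}P)|_N)\bigr)\cong H^0(N,T^*N)\otimes\mathfrak{g}$ of the connecting map is nonzero whenever, say, $N$ is a compact curve of positive genus. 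What makes it true where the proposition is actually applied (Proposition \ref{prop:pluri_bundle}) is that $N$ is a twistor sphere: there ${\rm Hom}(TN,({\rm Ad}P)|_N)\cong\ol(-2)\otimes\mathfrak{g}$ has no nonzero sections, so the connecting map vanishes and $k^*$ is injective; this vanishing is exactly what the top-left zero in the paper's cohomology diagram encodes. So your ``rank count on the twistor sphere'' is the right instinct, but it must be promoted from an expectation to the precise statement $H^0\bigl(N,{\rm Hom}(TN,({\rm Ad}P)|_N)\bigr)=0$, and recognized as an input about $N$ rather than a formal step in the exact sequence.
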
 
\begin{proof} 
We have the following commutative diagram 

\begin{displaymath} 
\xymatrix{ 
                  &                       0                                    &                        0                                &                       0                                  &          \\
  0  \ar[r]   &  N\times\mathfrak{g}   \ar[r] \ar[u]    &  \mathcal{N}_PN  \ar[r] \ar[u]        &   \mathcal{N}_MN \ar[r] \ar[u]      &     0   \\ 
   0  \ar[r]   &  N\times\mathfrak{g}   \ar[r] \ar[u]^=    &  TP|_{s(N)}   \ar[r] \ar[u]                &   TM|_N                   \ar[r] \ar[u]_k        &     0   \\ 
                  &                       0               \ar[r] \ar[u]   &        TN          \ar[r]^= \ar[u]            &   TN                         \ar[r] \ar[u]_j       &    0    \\
                  &                                                             &          0                         \ar[u]            &    0                                    \ar[u]        &          }
\end{displaymath} 
where we have used the isomorphism $({\rm ker}\dif\!\p)|_{s(N)}=N\times\mathfrak{g}$\,, induced by the fact that $P|_N$ is trivial. 
Now, on dualizing the diagram and then by passing to the cohomology sequences of the rows we obtain the commutative diagram 
\begin{displaymath} 
\xymatrix{ 
           0   \ar[d]    &                       0    \ar[d]                                         \\
  H^1\bigl(N,\mathcal{N}_M^*N\bigr) \ar[d]^{k^*}   &  H^0(N,N\times\mathfrak{g}^*)   \ar[l]_{o_N} \ar[d]^=       \\ 
   H^1\bigl(N,T^*M|_N\bigr)  \ar[d]^{j^*}   &  H^0(N,N\times\mathfrak{g}^*)   \ar[l]_{o|_N} \ar[d]      \\ 
   H^1(N,T^*N)        \ar[d]              &       0          \ar[l]                  \\ 
          0        &                                    } 
\end{displaymath} 
from which the result quickly follows. 
\end{proof}

\end{document}